\nonstopmode\numberwithin{equation}{section}
\nonstopmode \numberwithin{equation}{section}
\theoremstyle{plain}
\newtheorem{conj}{Conjecture}
\theoremstyle{definition}
\newtheorem{thm}{Theorem}[section]
\newtheorem{prob}{Problem}[section]
\newtheorem{cor}{Corollary}[section]
\newtheorem{ques}{Question}[section]
\newtheorem{prop}{Proposition}[section]
\newtheorem{rem}{Remark}[section]
\newtheorem{lem}{Lemma}[section]
\newcounter{minutes}\setcounter{minutes}{\time}
\newcounter{hours}\setcounter{hours}{\time}
\newcounter {own}
\def\theown {\thesection       .\arabic{own}}
\newenvironment{pf}[1][]{%
 \vskip 3mm
 \noindent
 \ifthenelse{\equal{#1}{}}%
  {{\slshape Proof. }}%
  {{\slshape #1.} }%
 }%
{\qed\bigskip}
\newcounter{alphabet}
\def\be{\begin{equation}}
\def\ee{\end{equation}}
\newcommand{\bee}{\begin{enumerate}}
\newcommand{\eee}{\end{enumerate}}
\newcommand{\blem}{\begin{lem}}
\newcommand{\elem}{\end{lem}}
\newcommand{\bthm}{\begin{thm}}
\newcommand{\ethm}{\end{thm}}
\newcommand{\bcor}{\begin{cor}}
\newcommand{\ecor}{\end{cor}}
\newcommand{\beg}{\begin{examp}}
\newcommand{\eeg}{\end{examp}}
\newcommand{\begs}{\begin{examples}}
\newcommand{\eegs}{\end{examples}}
\newcommand{\bdefn}{\begin{defn}}
\newcommand{\edefn}{\end{defn}}
\newcommand{\bprob}{\begin{prob}}
\newcommand{\eprob}{\end{prob}}
\newcommand{\bei}{\begin{itemize}}
\newcommand{\eei}{\end{itemize}}
\newcommand{\bcon}{\begin{conj}}
\newcommand{\econ}{\end{conj}}
\newcommand{\bcons}{\begin{conjs}}
\newcommand{\econs}{\end{conjs}}
\newcommand{\bprop}{\begin{prop}}
\newcommand{\eprop}{\end{prop}}
\newcommand{\br}{\begin{rem}}
\newcommand{\er}{\end{rem}}
\newcommand{\brs}{\begin{rems}}
\newcommand{\ers}{\end{rems}}
\newcommand{\bo}{\begin{obser}}
\newcommand{\eo}{\end{obser}}
\newcommand{\bos}{\begin{obsers}}
\newcommand{\eos}{\end{obsers}}
\newcommand{\bpf}{\begin{pf}}
\newcommand{\epf}{\end{pf}}
\newcommand{\ba}{\begin{array}}
\newcommand{\ea}{\end{array}}
\newcommand{\beq}{\begin{eqnarray}}
\newcommand{\beqq}{\begin{eqnarray*}}
\newcommand{\eeq}{\end{eqnarray}}
\newcommand{\eeqq}{\end{eqnarray*}}
\begin{document}

\title{Refined Bohr inequality for functions in $\mathbb{C}^n$ and in complex Banach spaces}

\author{Sabir Ahammed}
\address{Sabir Ahammed, Department of Mathematics, Jadavpur University, Kolkata-700032, West Bengal,India.}
\email{sabira.math.rs@jadavpuruniversity.in}

\author{Molla Basir Ahamed$ ^* $}
\address{Molla Basir Ahamed, Department of Mathematics, Jadavpur University, Kolkata-700032, West Bengal,India.}
\email{mbahamed.math@jadavpuruniversity.in}

\subjclass[{AMS} Subject Classification:]{Primary 30A10; 30B10; 30C45; 32A05; 32A10; 32K05}
\keywords{Bohr phenomenon, holomorphic function, harmonic mappings, lacunary series, balanced domain.}

\def\thefootnote{}
\footnotetext{ {\tiny File:~\jobname.tex,
printed: \number\year-\number\month-\number\day,
          \thehours.\ifnum\theminutes<10{0}\fi\theminutes }
} \makeatletter\def\thefootnote{\@arabic\c@footnote}\makeatother
\begin{abstract} 
In this paper, we first obtain a refined version of the Bohr inequality of norm-type for holomorphic mappings with lacunary series on the polydisk in $\mathbb{C}^n$ under some restricted conditions. Next, we determine the refined version of the Bohr inequality for holomorphic functions defined on a balanced domain $ G $ of a complex Banach space $ X $ and take values from the unit disk $ \mathbb{D} $. Furthermore, as a consequence of one of this results, we obtain a refined version of the Bohr-type inequalities for harmonic functions $ f=h+\bar{g} $ defined on a balanced domain $ G\subset X $. All the results are proved to be sharp.  
\end{abstract}
\maketitle
\pagestyle{myheadings}
\markboth{S. Ahammed and M. B. Ahamed }{Refined Bohr inequality for functions in $\mathbb{C}^n$ and in complex Banach spaces}
\section{Introduction}
Bohr’s power series theorem was discovered a century ago in the context of the study of Bohr’s
absolute convergence problem for the Dirichlet series is now an active area of research for different function spaces. In \cite{Bohr-1914}, Harald Bohr proved that for every holomorphic function $ f $ on the unit disc $ \mathbb{D}:=\{z\in\mathbb{C}: |z|<1\} $
\begin{align}\label{BS-eq-1.1}
\sup_{|z|\leq\frac{1}{3}}\sum_{n=0}^{\infty}\bigg|\frac{f^{(n)}(0)}{n!}z^n\bigg|\leq ||f||_{\infty}:=\sup_{z\in \mathbb{D}}|f(z)|,
\end{align}
and the radius $ 1/3 $ is optimal. The constant $ 1/3 $ is famously known as the Bohr radius and \eqref{BS-eq-1.1} is known as the Bohr inequality for the class $ \mathcal{B} $ of analytic self-maps on unit disk $ U $. \vspace{1.2mm}

Several investigations and new problems on Bohr’s inequality
in the one complex variable appeared in the literature  (see \cite{Abu-CVEE-2010,Arora-CVEE-2022,Alkhaleefah-Kayumov-Ponnusamy-PAMS-2019,Allu-Arora-JMAA-2022,Kayumov-Pon-CMFT-2017,Kayumov-Ponnusamy-JMAA-2018,Kayumov-Khammatova-Ponnusamy-JMAA-2021} and references therein). However, a detailed account of research on the Bohr radius problem, what is known as Bohr's phenomenon, can be found in the survey article \cite{Ponnusmy-Survey}. Also, references on the problem
of Bohr’s phenomenon can be found in the research book \cite{Kresin-Book-1903}. Actually, Bohr’s theorem received greater interest in $ 1995 $ after it was used by Dixon \cite{Dixon & BLMS & 1995} to characterize Banach algebras that satisfy von Neumann's inequality. Since then, a lot of research
has been devoted to extend Bohr’s result in multidimensional and abstract settings. In fact, the generalization of Bohr’s theorem for various function spaces is now an active area of research and different versions of the Bohr inequality are established in the last three decades. For instance, Aizenberg \textit{et al.} \cite{aizenberg-2001}, Aytuna and Djakov \cite{Aytuna-Djakov-BLMS-2013} have studied the Bohr property of bases for holomorphic functions; Ali \textit{et al.} \cite{Ali-Abdul-NG-CVEE-2016} have found the Bohr radius for the class of starlike log-harmonic mappings; while Paulsen \textit{et al.} \cite{Paulsen-PLMS-2002} have extended the Bohr inequality to Banach algebras; Hamada \emph{et al.}\cite{Hamada-IJM-2009} have studied the Bohr's theorem for holomorphic mappings with values in homogeneous balls; Galicer \emph{et al.}\cite{Galicer-Mansilla-Muro-TAMS-2020} have studied mixed Bohr radius in several complex variables, and many authors have studied Bohr-type inequalities for different class of functions (see \cite{Aha-Aha-CMFT-2023,Aha-Allu-RMJ-2022,Alkhaleefah-Kayumov-Ponnusamy-PAMS-2019,Allu-Arora-JMAA-2022,Das-JMAA-2022,Lata-Singh-PAMS-2022,S. Kumar-PAMS-2022,Kumar-JMAA-2023} and references therein.)  However, it can be noted that not every class of functions has the Bohr phenomenon, for example, B\'an\'at\'aau \emph{et al.} \cite{Beneteau-2004} showed that there is no Bohr phenomenon in the Hardy space $ H^p(\mathbb{D},X), $ where $p\in [1,\infty).$ In \cite{Liu-Liu-JMAA-2020}, Liu and Liu have shown that Bohr's inequality fails to hold for the class $ \mathcal{H}(\mathbb{D}^2, \mathbb{D}^2) $, a set of holomorphic functions $ f : \mathbb{D}^2\rightarrow \mathbb{D}^2 $ having lacunary series expansion. \vspace{1.2mm}

Studying Bohr inequality in view of its refined and improved form, and also establishing them as sharp for classes of analytic self-maps and also for certain classes of harmonic mappings on unit disk $ \mathbb{D} $ are extensively studied in recent years. For a detailed study of refined Bohr inequality, we refer to the articles \cite{Liu-Ponnusamy-PAMS-2021,Liu-Liu-Ponnusamy-2021} and references therein. The Bohr inequality has actually been little studied for a functions of several complex variables. Not only that, but whether the refined version of this inequality can be established for vector-valued functions, holomorphic functions with series representation in $ \mathbb{C}^n $, and more to the point, whether refined versions of the same in Banach spaces can be established, and whether they will be sharp, have not been studied yet. Recently,  Liu \emph{et al.}\cite{Liu-Liu-Ponnusamy-2021} established several refined versions of Bohr's inequality in the case of $ f\in\mathcal{H}(\mathbb{D}, \mathbb{D}) $. We recall the result here.
\begin{thm}\label{Thmm-2.1}
Suppose that $f(z)=\sum_{n=0}^{\infty}a_{pn+m}z^{pn+m} \in \mathcal{H}(\mathbb{D}, \mathbb{D}) ,$ where $p\in \mathbb{N}$ and $0\leq m\leq p$. Then
\begin{align*}
\sum_{n=1}^{\infty}|a_{pn+m}|r^{pn+m}+\left(\dfrac{1}{1+|a_m|}+\dfrac{r^p}{1-r^p}\right)\sum_{n=2}^{\infty}|a_{pn+m}|^2r^{{2pn+m}}\leq 1
\end{align*}
for $|z|=r\leq r_{p,m}(|a_m|),$ where  $r_{p,m}(|a_m|)$ is the unique positive root of the equation  
\begin{align*}
(1-|a_m|-|a_m|^2)r^{p+m}+r^p+|a_m|r^m-1=0.
\end{align*}
Further, we have $r_{p,m}(|a_m|)\geq \sqrt[p]{1/(2+|a_m|)}.$
\end{thm}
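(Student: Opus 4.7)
The plan is to reduce the lacunary-series problem to the standard Schur-class setting via the factorisation $f(z)=z^{m}g(z^{p})$, and then combine classical coefficient estimates with an algebraic reduction governed by the defining equation of $r_{p,m}(|a_m|)$.

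First, I would observe that $f(z)=z^{m}g(z^{p})$, where $g(w):=\sum_{n=0}^{\infty} a_{pn+m}w^{n}$. From $|f(z)|\leq 1$ on $\mathbb{D}$ one has $|w|^{m/p}|g(w)|\leq 1$; since $\log|g|$ is subharmonic and $(m/p)\log|w|\to 0$ as $|w|\to 1^{-}$, the maximum principle forces $|g(w)|\leq 1$ on $\mathbb{D}$. Hence $g$ is a Schur function with $g(0)=a_{m}$. Writing $a:=|a_{m}|$ and $\rho:=r^{p}$, the two sums appearing in the theorem factor as $r^{m}\sum_{n\geq 1}|a_{pn+m}|\rho^{n}$ and $r^{m}\sum_{n\geq 2}|a_{pn+m}|^{2}\rho^{2n}$, transferring the entire question to $g$.

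Next, from the Schwarz--Pick lemma applied to $g$ I would extract the classical coefficient bound $|a_{pn+m}|\leq 1-a^{2}$ for every $n\geq 1$, yielding immediately
\begin{equation*}
\sum_{n=1}^{\infty}|a_{pn+m}|\,r^{pn+m}\;\leq\;\frac{(1-a^{2})\,r^{p+m}}{1-r^{p}}.
\end{equation*}
For the quadratic tail I would apply Parseval's identity to $g(w)-a_{m}$ on $|w|=\rho$ and combine it with the Schwarz--Pick estimate $|g(w)-a_{m}|\leq \rho\,|1-\overline{a_{m}}g(w)|$ to derive the Wiener-type bound
\begin{equation*}
\sum_{n=1}^{\infty}|a_{pn+m}|^{2}\rho^{2n}\;\leq\;\frac{(1-a^{2})^{2}\rho^{2}}{1-a^{2}\rho^{2}},
\end{equation*}
from which $\sum_{n\geq 2}|a_{pn+m}|^{2}\rho^{2n}$ is controlled after isolating the $n=1$ contribution by $|a_{p+m}|\leq 1-a^{2}$.

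Third, I would substitute these two estimates into the target inequality and use the identity
\begin{equation*}
\frac{1}{1+a}+\frac{\rho}{1-\rho}\;=\;\frac{1+a\rho}{(1+a)(1-\rho)}
\end{equation*}
together with the factorisation $1-a^{2}=(1-a)(1+a)$. After cancellation, the desired bound reduces to the algebraic inequality $(1-a-a^{2})r^{p+m}+r^{p}+ar^{m}\leq 1$, which is precisely the condition $r\leq r_{p,m}(a)$ by the definition of $r_{p,m}(a)$ as the unique positive root of the displayed polynomial in $(0,1]$ (uniqueness is easy: the polynomial equals $-1$ at $r=0$, equals $1-a^{2}\geq 0$ at $r=1$, and has positive derivative on $(0,1)$). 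The main obstacle is exactly this algebraic reduction: one has to match the rational function produced by the two coefficient bounds with the cubic-type polynomial defining $r_{p,m}$, and keep careful track of the omitted $n=1$ term in the quadratic sum.

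Finally, the auxiliary lower estimate $r_{p,m}(a)\geq \sqrt[p]{1/(2+a)}$ I would prove by plugging $r^{p}=1/(2+a)$ into the defining polynomial and verifying that the resulting value is $\leq 0$; monotonicity then places the unique positive root to the right. For sharpness, I would exhibit the extremal Möbius-type map $f(z)=z^{m}(a_{m}-z^{p})/(1-\overline{a_{m}}z^{p})$, whose lacunary Taylor coefficients saturate the Schwarz--Pick bound $|a_{pn+m}|=1-a^{2}$ (times powers of $a$) and realise equality in all the intermediate inequalities at $r=r_{p,m}(a)$.
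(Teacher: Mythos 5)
Your reduction $f(z)=z^{m}g(z^{p})$ with $g\in\mathcal{H}(\mathbb{D},\overline{\mathbb{D}})$ and $g(0)=a_m$, your verification of the lower bound $r_{p,m}(a)\ge\sqrt[p]{1/(2+a)}$, and your choice of extremal function are all fine, but the central analytic step fails. You propose to bound the linear sum via $|a_{pn+m}|\le 1-a^{2}$, giving $\sum_{n\ge1}|a_{pn+m}|r^{pn+m}\le (1-a^{2})r^{p+m}/(1-r^{p})$, to bound the quadratic tail separately by the Wiener-type estimate, and then to add the two. This cannot reduce to $(1-a-a^{2})r^{p+m}+r^{p}+ar^{m}\le 1$, because that inequality is equivalent to $ar^{m}+(1-a^{2})r^{p+m}/(1-r^{p})\le 1$; hence at the critical radius your bound on the linear sum \emph{alone} already equals $1-ar^{m}$ (and equals $1$ when $a_m=0$), leaving no room for the strictly positive quadratic bound you then add. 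Concretely, for $a_m=0$ your chain of estimates produces an upper bound of the form $1+(\text{positive quantity})$ at $r=r_{p,0}(0)$, so the claimed ``cancellation'' down to the defining polynomial is not available. The entire content of the refinement is that the quadratic term is absorbed by the slack in the crude bound $|a_{pn+m}|\le 1-a^{2}$, which no genuine Schur function attains for all $n$ simultaneously; quantifying that trade-off requires a single combined inequality, not two sharp estimates added together.

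The paper itself does not reprove this theorem (it is recalled from Liu--Liu--Ponnusamy), but the mechanism it uses for all analogous results is visible in the proof of Theorem \ref{BS-thm-2.7}: apply Lemma \ref{Lemm-1.2} with $N=1$ (so $t=0$) to $\omega(\mu)=c_{0}+\sum_{s\ge1}c_{s}\mu^{s}$ after the substitution $\mu=\lambda^{p}$, which yields in one stroke
\begin{align*}
\sum_{n=1}^{\infty}|c_{n}|\rho^{n}+\left(\frac{1}{1+|c_{0}|}+\frac{\rho}{1-\rho}\right)\sum_{n=1}^{\infty}|c_{n}|^{2}\rho^{2n}\le\left(1-|c_{0}|^{2}\right)\frac{\rho}{1-\rho},
\end{align*}
and only then multiply by $r^{m}$, add $|a_{m}|r^{m}$, and perform exactly the algebraic reduction you describe. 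Your argument becomes correct once you replace your two separate classical estimates by this single refined lemma (or prove it, e.g.\ by the coefficient inequality of Ponnusamy--Vijayakumar--Wirths); everything else in your outline, including the sharpness discussion, then goes through.
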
 
Our main interest in this paper is not only to give several extensions of the Bohr inequality in terms of refined formulations for functions discussed above but also to establish their sharpness. To be more precise, the aim of this article is to address a refined version of the multidimensional analog of Theorem \ref{Thmm-2.1} for holomorphic and harmonic functions on a balanced domain $ G $ of a complex Banach space $ X $. The derived results of this paper reduce to the corresponding results in one complex variable.  
\section{Refined Bohr's inequality for vector-valued functions in several complex variables}
The study of the Bohr phenomenon also gets much attention from several researchers when Boas and Khavinson \cite{Boas-1997} extended the concept of the Bohr radius problem from bounded analytic functions on $\mathbb{D}$ to holomorphic functions in several complex variables (see, e.g. \cite{Aizenberg-Djakov-PAMS-2000,Paulsen-PLMS-2002,Hamada-IJM-2009,Galicer-Mansilla-Muro-TAMS-2020,S. Kumar-PAMS-2022,Lin-Liu-Ponnusamy-Acta-2023}). More precisely to say, in 1$ 997 $, Boas and Khavinson \cite{Boas-1997} introduced the $ N $-dimensional Bohr radius $ K_N, $ $ (N>1$ ) for the polydisk $ \mathbb{D}^N=\mathbb{D}\times\cdots\times \mathbb{D} $ which generates extensive research activity in Bohr radius problems. Actually, Boas and Khavinson \cite{Boas-1997} proved that the $ N $-Bohr radius $ K_N $ as the largest radius $ r>0 $ such that for every complex polynomials $ \sum_{\alpha\in\mathbb{N}_0^N}c_{\alpha}z^{\alpha} $ in $ N $ variables 
\begin{align*}
	\sup_{z\in r\mathbb{D}^N} \sum_{\alpha\in\mathbb{N}_0^N}|c_{\alpha}z^{\alpha}|\leq \sup_{z\in r\mathbb{D}^N} \bigg|\sum_{\alpha\in\mathbb{N}_0^N}c_{\alpha}z^{\alpha}\bigg|.
\end{align*}
As expected, the constant $ K_N $ is defined as the largest radius $ r $ satisfying $ \sum_{\alpha}|c_{\alpha}z^{\alpha}|<1 $ for all $ z $ with $ ||z||_{\infty}:=\max\{|z_1|, |z_2|, \ldots, |z_N|\}<r $ and all $ f(z)=\sum_{\alpha}c_{\alpha}z^{\alpha}\in\mathcal{H}(U^N) $. In recent years, a lot of attention has been paid to multidimensional generalizations of Bohr’s theorem. For different aspects of multidimensional Bohr phenomenon including recent advances in this topic, we refer to the articles by  Aizenberg \cite{Aizn-PAMS-2000}, Liu and Ponnusamy \cite{Liu-Ponnusamy-PAMS-2021}, Paulsen \cite{Paulsen-PLMS-2002}, Defant and Frerick \cite{Defant-Frerick-IJM-2001}, Kumar\cite{S. Kumar-PAMS-2022} and also \cite{Liu-Liu-JMAA-2020,Lin-Liu-Ponnusamy-Acta-2023} and references therein.\vspace{1.2mm}

Throughout the paper, we denote $\mathbb{N}_0$  be the set of non-negative integers, $\mathbb{D}^n$ be the open unit polydisk in $\mathbb{C}^n,$ and  $\mathcal{H}\left(X,Y\right)$ be the set of holomorphic mappings from $X$ into $Y$. Let the symbol $^\prime$ stand for transpose.  By an example, Liu and Liu \cite{Liu-Liu-JMAA-2020} have shown that the Bohr inequality of norm type for holomorphic mappings with lacunary series on the unit polydisc in $ \mathbb{C}^n $ does not hold in general. More precisely, it is shown in \cite{Liu-Liu-JMAA-2020} that for the function $ f $ given by
\begin{align*}
	f(z)=\left(z_1\frac{z_1-\frac{1}{\sqrt{2}}}{1-\frac{1}{\sqrt{2}}z_1}, z_2\frac{z_2-\frac{2}{\sqrt{5}}}{1-\frac{2}{\sqrt{5}}z_2}\right)^{\prime},\; z=(z_1, z_2)^{\prime}
\end{align*}
that $ f\in \mathcal{H}(\mathbb{D}^2, \overline{\mathbb{D}^2}) $ and 
\begin{align*}
	&\sum_{s=1}^{\infty}\frac{||D^sf(0)\left(z^s\right)||}{s!}>\frac{1}{\sqrt{2}}\left(\frac{2}{\sqrt{5}}+\frac{1}{\sqrt{2}}\right)>1\; \mbox{for}\; z=\left(\frac{1}{\sqrt{2}}, \frac{1}{\sqrt{2}}\right).
\end{align*}
To overcome the problem, the authors have studied Bohr inequality in \cite{Liu-Liu-JMAA-2020} with some restricted condition on the function $ f $ belong to the classes $ \mathcal{H}\left(\mathbb{D}^n, \mathbb{C}^n\right) $ and $ \mathcal{H}\left(\mathbb{D}^n, \overline{U^n}\right) $. In \cite{Liu-Liu-JMAA-2020}, the Bohr inequality of norm-type is investigated for holomorphic mappings $f$ for the class $\mathcal{H}\left(\mathbb{D}^n,\mathbb{C}^n\right)$ with lacunary series under some restricted conditions on the function $f.$\vspace{1.2mm}

 To establish the Bohr inequality for holomorphic functions in $ \mathbb{C}^n $, the following lemma was obtained by Liu and Liu in \cite{Liu-Liu-JMAA-2020}.
\begin{lem}\cite{Liu-Liu-JMAA-2020}\label{Lem-1.1}
Let $m\in\mathbb{N}_0,$ $N\in\mathbb{N}$ and 
\begin{align*}
\begin{cases}
\phi_1(r)= 2r^N+r-1, r\in [0,),\\
\phi_2(r)=4r^{2(N-m)}+4r^{N+1-2m}-4r^{N-2m}+r^2-2r+1, r\in[0,1), N>2m,\\
\phi_3(r)=4r^N+r^{2+2m-N}-2r^{1+2m-N}+r^{2m-N}+4r-4, r\in[0,1), m+1\leq N\leq 2m.	
\end{cases}
\end{align*}
Then there exists the maximal positive root for each $\phi_k(r)=0$ $(k=1,2,3).$
\end{lem}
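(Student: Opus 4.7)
The plan is to apply the intermediate value theorem to each $\phi_k$ on the closed interval $[0,1]$ and then extract the maximal positive root via a closedness/compactness argument. I would first handle $\phi_1$: since $\phi_1(0)=-1$, $\phi_1(1)=2$, and $\phi_1'(r)=2Nr^{N-1}+1>0$ on $[0,1)$, there is a unique, hence maximal, positive root $r^*\in(0,1)$ of $\phi_1$.

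For $\phi_3$, I would collect the three middle monomials as $r^{2m-N}(r-1)^2$, yielding the compact form
\[
\phi_3(r)=4(r^N+r-1)+r^{2m-N}(1-r)^2.
\]
Evaluating at the endpoints gives $\phi_3(1)=4>0$, while $\phi_3(0)=-3$ when $N=2m$ and $\phi_3(0)=-4$ when $N<2m$; IVT then produces a zero in $(0,1)$.

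The main obstacle is $\phi_2$, because both endpoint values are positive, $\phi_2(0)=1$ and $\phi_2(1)=4$, so a direct IVT between $0$ and $1$ is unavailable. To force a sign change I would rewrite
\[
\phi_2(r)=4r^{N-2m}(r^N+r-1)+(1-r)^2
\]
and probe at the root $r^*$ of $\phi_1$ produced above. The relation $2r^{*N}+r^*=1$ gives $r^{*N}+r^*-1=-(1-r^*)/2$, and substituting $2r^{*\,N-2m}=(1-r^*)r^{*\,-2m}$ collapses the expression to
\[
\phi_2(r^*)=(1-r^*)^2\bigl(1-r^{*\,-2m}\bigr)\le 0,
\]
with equality iff $m=0$, since $r^*\in(0,1)$ and $m\ge 0$ force $r^{*\,-2m}\ge1$. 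Together with $\phi_2(1)=4>0$ and continuity, IVT delivers a zero of $\phi_2$ in $[r^*,1)$.

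Finally, in each of the three cases the zero set $Z_k\subset[0,1)$ is closed, and since $\phi_k(r)\to\phi_k(1)>0$ as $r\to 1^-$, it is contained in some compact subinterval $[0,1-\varepsilon_k]$. Therefore $\sup Z_k\in Z_k$ and this is the desired maximal positive root. The only non-routine step is the evaluation $\phi_2(r^*)\le 0$, which is what ties the $\phi_2$ case back to the root already produced for $\phi_1$.
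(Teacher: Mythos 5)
This lemma is imported from Liu--Liu \cite{Liu-Liu-JMAA-2020} and the present paper gives no proof of it, so there is nothing in the source to compare your argument against; judged on its own, your proof is correct and complete. The routine parts check out: $\phi_1(0)=-1<0<2=\phi_1(1)$ with $\phi_1'>0$ gives a unique root $r^*$; the regrouping $\phi_3(r)=4(r^N+r-1)+r^{2m-N}(1-r)^2$ is valid because $N\le 2m$ keeps all exponents nonnegative, and the endpoint values $\phi_3(0)\in\{-3,-4\}$, $\phi_3(1)=4$ are right. The one genuinely non-obvious step is $\phi_2$, where both endpoints are positive; your device of writing $\phi_2(r)=4r^{N-2m}(r^N+r-1)+(1-r)^2$ and evaluating at the root $r^*$ of $\phi_1$, using $2r^{*N}=1-r^*$ to collapse the expression to $(1-r^*)^2\bigl(1-r^{*-2m}\bigr)\le 0$, is correct (with $N>2m$ guaranteeing $r^{N-2m}$ is a genuine positive power, and the degenerate case $m=0$ handled since then $r^*$ itself is a root of $\phi_2$). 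The closing compactness argument for the existence of a \emph{maximal} root is sound, though slightly heavier than necessary: each $\phi_k$ is a polynomial (all exponents are nonnegative integers in the respective cases), so its zero set is finite and the maximum of the nonempty set of positive roots exists trivially; your observation that $\phi_k(1)>0$ additionally places the maximal root strictly inside $(0,1)$, which is what the subsequent theorems actually need.
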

In what follows, $ \lfloor x \rfloor $ denotes the largest integer not more than $ x $, where $ x $ is a real number. We recall here a key result from \cite{Liu-Liu-Ponnusamy-2021} which will be useful to prove our results of this paper.
\begin{lem}\cite{Liu-Liu-Ponnusamy-2021}\label{Lemm-1.2}
Suppose that $ f(z)=\sum_{n=0}^{\infty}a_nz^n\in \mathcal{H}(\mathbb{D}, \mathbb{D}) $. Then for any $N\in\mathbb{N}$, the following inequality holds:
\begin{align*}
\sum_{n=N}^{\infty}|a_n|r^n+sgn(t)\sum_{n=1}^{t}|a_n|^2\dfrac{r^N}{1-r}+\left(\dfrac{1}{1+|a_0|}+\dfrac{r}{1-r}\right)\sum_{n=t+1}^{\infty}|a_n|^2r^{2n}\leq (1-|a_0|^2)\dfrac{r^N}{1-r}, 
\end{align*}
\;\mbox{for}\; $|z|= r\in[0,1),$ where $t=\lfloor{(N-1)/2}\rfloor$.
\end{lem}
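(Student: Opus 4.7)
The plan is to combine two classical coefficient bounds for $f(z)=\sum_{n\ge 0}a_nz^n\in\mathcal{H}(\mathbb{D},\mathbb{D})$: the Schwarz--Pick bound $|a_n|\le 1-|a_0|^2$ for every $n\ge 1$, and Parseval's inequality $\sum_{n\ge 1}|a_n|^2\le 1-|a_0|^2$. These are the only analytic ingredients; the rest of the proof is algebraic bookkeeping with the parameter $t=\lfloor(N-1)/2\rfloor$. Schwarz--Pick will control the linear tail $\sum_{n\ge N}|a_n|r^n$, while Parseval together with a parity trick will handle the two quadratic contributions.

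The first step is the parity observation that, since $t=\lfloor(N-1)/2\rfloor$, a direct check in the two cases $N=2k$ and $N=2k+1$ gives $2(t+1)\ge N$, and hence $r^{2n+1}\le r^N$ for every $n\ge t+1$ and $r\in[0,1)$. This pointwise comparison converts the $\frac{r}{1-r}$-weighted tail into a $\frac{r^N}{1-r}$-weighted one,
\[
\frac{r}{1-r}\sum_{n=t+1}^\infty|a_n|^2 r^{2n}=\sum_{n=t+1}^\infty|a_n|^2\frac{r^{2n+1}}{1-r}\le\frac{r^N}{1-r}\sum_{n=t+1}^\infty|a_n|^2,
\]
and when combined with the $\operatorname{sgn}(t)\frac{r^N}{1-r}\sum_{n=1}^{t}|a_n|^2$ piece yields $\frac{r^N}{1-r}\sum_{n=1}^\infty|a_n|^2$. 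Parseval would then bound this by $(1-|a_0|^2)\frac{r^N}{1-r}$, but note that two positive quantities are left over: the Parseval slack $(1-|a_0|^2-\sum|a_n|^2)\frac{r^N}{1-r}$, and the telescoping surplus $\sum_{n\ge t+1}|a_n|^2(r^N+r^{N+1}+\cdots+r^{2n})$ coming from the strict comparison $r^{2n+1}<r^N$. These two reservoirs of slack must absorb the remaining pieces $\sum_{n\ge N}|a_n|r^n$ and $\frac{1}{1+|a_0|}\sum_{n\ge t+1}|a_n|^2 r^{2n}$.

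To carry out the absorption I would use the algebraic identity
\[
\frac{|a_n|^2}{1+|a_0|}=(1-|a_0|)|a_n|-\frac{|a_n|\bigl(1-|a_0|^2-|a_n|\bigr)}{1+|a_0|},
\]
whose subtracted part is non-negative by Schwarz--Pick, re-expressing $\frac{|a_n|^2}{1+|a_0|}$ as a linear expression in $|a_n|$ up to a discardable non-negative remainder. With this, the whole inequality reduces to a coefficient-by-coefficient comparison in the formal power series in $r$: for each $k\ge N$, the coefficient of $r^k$ on the left is $|a_k|$ together with the rewritten linear contributions $(1-|a_0|)|a_n|$ for those $n\ge t+1$ with $2n=k$, while on the right it is $(1-|a_0|^2)$ plus the surplus contributions $|a_n|^2$ for $n\ge t+1$ with $N\le k\le 2n$. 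The main obstacle will be this final bookkeeping: verifying that, for each exponent $k$, the Schwarz--Pick bound applied to $|a_k|$ combined with the surplus quadratic terms accounts for everything, without losing a factor of $(1-|a_0|)$ relative to the target weight $\frac{1}{1+|a_0|}$. This forces the slack and surplus to be spent simultaneously rather than serially, and the boundary case $N=1$ (so $t=0$ and $\operatorname{sgn}(t)=0$) must be treated separately to match the $\operatorname{sgn}(t)$ convention in the statement.
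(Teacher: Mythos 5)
A preliminary remark: the paper does not prove Lemma \ref{Lemm-1.2} at all --- it imports it verbatim from \cite{Liu-Liu-Ponnusamy-2021} --- so your attempt can only be measured against the proof in that reference, which does not run on Schwarz--Pick and Parseval alone but on the finer coefficient inequalities of Ponnusamy, Vijayakumar and Wirths \cite{Ponnusamy-Vijayak-Wirths-RM-2020}. Your first half is correct and matches the standard argument: $2(t+1)\ge N$ is exactly why $t=\lfloor (N-1)/2\rfloor$ is chosen, and rewriting $\frac{r}{1-r}\sum_{n\ge t+1}|a_n|^2r^{2n}$ as $\frac{r^N}{1-r}\sum_{n\ge t+1}|a_n|^2$ minus the surplus $\sum_{n\ge t+1}|a_n|^2\left(r^N+\cdots+r^{2n}\right)$ is the right normal form.

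The absorption step, however, is where the proof genuinely fails, not merely where the bookkeeping gets delicate. Replacing $\frac{|a_n|^2}{1+|a_0|}$ by $(1-|a_0|)|a_n|$ and discarding the non-negative remainder strictly enlarges the left-hand side whenever $|a_n|<1-|a_0|^2$, and the lemma tolerates no strict loss: for $N=1$ and $f(z)=(a-z)/(1-az)$ the asserted inequality is an identity in $r$, both sides equalling $(1-a^2)r/(1-r)$. Consequently the strengthened inequality you reduce to is false. Concretely, for that automorphism your coefficient-by-coefficient comparison at the exponent $k=4$ (so $N=1$, $t=0$, $n=2$) demands
\begin{align*}
|a_4|+(1-|a_0|)|a_2|\le 1-|a_0|^2-|a_1|^2,
\end{align*}
which with $|a_n|=(1-a^2)a^{n-1}$ reduces to $(a-1)^2\le 0$; in aggregate, at $a=r=1/2$ your intermediate inequality reads $34/21\le 8/5$ after dividing by $(1-a^2)r$, which is false. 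The structural reason is that Schwarz--Pick and Parseval are saturated by different coefficients and cannot be spent independently: what is actually needed is a joint estimate of the type $|a_{2m}|+\frac{|a_m|^2}{1+|a_0|}+\sum_{n=1}^{m-1}|a_n|^2\le 1-|a_0|^2$, which is tight for automorphisms and does not follow from the two classical bounds separately. Supplying an inequality of this kind is precisely the content of \cite{Ponnusamy-Vijayak-Wirths-RM-2020}, on which the proof in \cite{Liu-Liu-Ponnusamy-2021} rests; without such an input, your plan cannot be completed.
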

The Bohr inequality for functions $f\in \mathcal{H}\left(\mathbb{D}^n,\mathbb{C}^n\right)$ with lacunary series of the form
\begin{align}\label{BS-eq-2.1}
	f(z)&=(a_1z^m_1+g_1(z),a_2z_2^m+g_2(z),\dots,a_nz^m_n+g_n(z))^{\prime}\\&\nonumber= \frac{D^mf(0)\left(z^m\right)}{m!}+\sum_{s=N}^{\infty}\frac{D^{s}f(0)\left(z^{s}\right)}{s!}
\end{align}
are obtained under restricted conditions in \cite{Liu-Liu-JMAA-2020} as the following.
\begin{thm}\cite{Liu-Liu-JMAA-2020} \label{Thm-2.1}
Let $m\in\mathbb{N}_0,$  $a=(a_1,a_2,\dots, a_n)^{\prime},$ $N\geq m+1,$ $f\in \mathcal{H}\left(\mathbb{D}^n, \mathbb{C}^n\right)$ be given by \eqref{BS-eq-2.1}, $|a_l|=||a||=\max_{1\leq l\leq n}\{|a_l|\},$ $l=1,2,\dots, n$ and $a_jz^m_j+g_j(z)\in \mathcal{H}\left(\mathbb{D}^n, \overline{\mathbb{D}}\right),$ where $\frac{D^mf(0)\left(z^m\right)}{m!}=(a_1z^m_1,a_2z_2^m,\dots,a_nz^m_n)^{\prime}, $ and $j$ satisfies $|z_j|=||z||=\max_{1\leq l\leq n}\{|z_l|\}.$ Then 
\begin{align*}
\dfrac{||D^mf(0)\left(z^m\right)||}{m!}+\sum_{s=N}^{\infty}\dfrac{||D^{s}f(0)\left(z^{s}\right)||}{s!}\leq 1
\end{align*}
for $||z||=r\leq r_{N,m},$ where $r_{N,m}$ is the maximal positive root of the equation $\phi_k(r)=0$ $ (k=1, 2, 3) $ given in Lemma \ref{Lem-1.1}.
 \end{thm}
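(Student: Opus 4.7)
The plan is to reduce the vector-valued, several-complex-variables statement to a classical Bohr-type inequality on the unit disk $\mathbb{D}$ by slicing the polydisk through the point realizing the maximum coordinate, and then to exploit the assumption that $a_jz_j^m+g_j(z)$ maps into $\overline{\mathbb{D}}$ to feed the coefficients into a one-variable Schwarz--Pick type estimate. The hypothesis that every $|a_l|$ equals $\|a\|$ makes the slice independent of which coordinate $j$ we choose, which is what lets the $m$-homogeneous part of $f$ be controlled by a single scalar quantity.

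Concretely, fix $z\in\mathbb{D}^n$ with $\|z\|=r$, let $j$ be the index with $|z_j|=r$, and set $w=z/r$ so that $\|w\|=1$ and $\zeta w\in\mathbb{D}^n$ whenever $\zeta\in\mathbb{D}$. Define the slice function
$$F(\zeta)=a_j(\zeta w_j)^m+g_j(\zeta w)=a_jw_j^m\,\zeta^m+\sum_{s=N}^{\infty}\frac{D^sg_j(0)(w^s)}{s!}\,\zeta^s.$$
By hypothesis $F\in\mathcal{H}(\mathbb{D},\overline{\mathbb{D}})$, and its expansion is lacunary with a gap between the $m$-th coefficient $c_m=a_jw_j^m$ (of modulus $|a_j|=\|a\|$, since $|w_j|=1$) and the next nonzero block starting at $s=N$. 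This is exactly the setting of the classical Schwarz--Pick coefficient inequality combined with the lacunary Bohr estimate: one gets a bound
$$|c_m|\,r^m+\sum_{s=N}^{\infty}|c_s|\,r^s\leq 1\quad\text{for }r\leq r_{N,m},$$
where $r_{N,m}$ is the root of the relevant $\phi_k$ from Lemma \ref{Lem-1.1}, the three cases $\phi_1,\phi_2,\phi_3$ corresponding respectively to $m=0$, $N>2m$, and $m+1\le N\le 2m$, which is where the arithmetic of the Schwarz--Pick tail $(1-|c_m|^2)r^N/(1-r)$ switches behavior.

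Now transfer this estimate back to the vector-valued inequality. Since $|a_l|=\|a\|$ for every $l$ and $|z_l|\le r$, the max-norm satisfies $\|D^mf(0)(z^m)/m!\|=\max_l|a_l|\,|z_l|^m\leq\|a\|r^m=|c_m|r^m$, while for each $s\geq N$ the norm $\|D^sf(0)(z^s)/s!\|$ is, by the choice of $j$ realizing the maximal coordinate and the max-norm convention, bounded above by the absolute value of the Taylor coefficient of the slice function $F$ at $\zeta=r$. Summing these component bounds and applying the one-variable inequality produces
$$\frac{\|D^mf(0)(z^m)\|}{m!}+\sum_{s=N}^{\infty}\frac{\|D^sf(0)(z^s)\|}{s!}\le |c_m|r^m+\sum_{s=N}^{\infty}|c_s|r^s\le 1,$$
valid for $r\le r_{N,m}$.

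The step I expect to be hardest is establishing the correct scalar comparison $\|D^sf(0)(z^s)/s!\|\le|c_s(F)|$ uniformly in $s$. On its face this is not automatic: the left side is a norm of a vector with $n$ entries whose coordinates are various homogeneous polynomials evaluated at $z$, whereas $c_s(F)$ is the $s$-th Taylor coefficient of one particular slice. Making this identification rigorous requires carefully choosing the slice direction $w$ (so that $|w_j|=1$ at the coordinate where the norm is attained), exploiting that the max-norm of the vector equals a single coordinate's modulus, and using the condition $a_jz_j^m+g_j(z)\in\mathcal{H}(\mathbb{D}^n,\overline{\mathbb{D}})$ componentwise. Once this reduction is secured, the remaining work is the case analysis to pick out $\phi_1$, $\phi_2$, or $\phi_3$ from Lemma \ref{Lem-1.1} and to check that $r_{N,m}$ is indeed the first positive root giving the desired inequality.
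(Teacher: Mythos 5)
Your overall strategy---slice the polydisk through $z_0=z/\|z\|$, observe that the resulting one-variable function lies in $\mathcal{H}(\mathbb{D},\overline{\mathbb{D}})$ with a lacunary expansion whose $m$-th coefficient has modulus $\|a\|$, apply the one-variable Schwarz--Pick/lacunary Bohr estimate, and finish with the case analysis selecting $\phi_1,\phi_2,\phi_3$---is exactly the method the paper uses for the refined analogue (Theorem \ref{Thm-2.5}); Theorem \ref{Thm-2.1} itself is only quoted from \cite{Liu-Liu-JMAA-2020}. However, the step you yourself flag as hardest is a genuine gap as you have left it, and the repair you sketch (choosing the slice direction so that $|w_j|=1$ at the coordinate where the norm is attained) does not close it. The comparison you need, $\|D^sf(0)(z^s)\|/s!\le |c_s(F)|\,r^s$ for the single slice $F(\zeta)=f_j(\zeta w)$, is not valid in general: $\|D^sf(0)(z^s)\|=\max_l|D^sf_l(0)(z^s)|$, and for $s\ge N$ the maximizing component need not be $j$. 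If instead you slice every component $f_l$ along the same direction $w$, the $m$-th slice coefficient of $f_l$ is $a_l(z_l/r)^m$, whose modulus $\|a\|\,|z_l/r|^m$ can be arbitrarily small when $m\ge1$ and $|z_l|<r$; the Schwarz--Pick bound for those components then degrades from $1-\|a\|^2$ toward $1$, and the resulting termwise estimate $\|a\|r^m+r^N/(1-r)$ exceeds $1$ at $r=r_{N,m}$ (for $N=2$, $m=1$, $r=3/5$ it is $\tfrac{3}{5}\|a\|+\tfrac{9}{10}>1$ for every $\|a\|>1/6$). So the reduction to a single slice cannot deliver the theorem.

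The missing ingredient is the maximum modulus principle applied degree by degree: for each fixed $s$ and $l$, $\xi\mapsto D^sf_l(0)(\xi^s)$ is a homogeneous polynomial, so its supremum over $\{\|\xi\|\le 1\}$ is attained on the distinguished boundary $|\xi_1|=\cdots=|\xi_n|=1$. At such a point the slice $\lambda\mapsto f_l(\lambda\xi)$ has $m$-th coefficient $a_l\xi_l^m$ of modulus exactly $|a_l|=\|a\|$ --- this is precisely where the hypothesis that all the $|a_l|$ equal $\|a\|$ is used --- so the Schwarz--Pick coefficient inequality gives $|D^sf_l(0)(\xi^s)|/s!\le 1-\|a\|^2$ uniformly in $l$ and $\xi$, hence $\|D^sf(0)(z^s)\|/s!\le(1-\|a\|^2)r^s$ for every $s\ge N$. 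Summing yields the majorant $\|a\|r^m+(1-\|a\|^2)r^N/(1-r)$, after which your case analysis over $\phi_1,\phi_2,\phi_3$ goes through verbatim. This is what the paper's terse appeal to ``the maximum modulus principle'' in the proof of Theorem \ref{Thm-2.5} is actually doing; without it, the passage from the vector norm to the coefficients of one slice does not hold.
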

\begin{thm}\cite{Liu-Liu-JMAA-2020}\label{Thm-2.2}
Let $m\in\mathbb{N}_0,$  $N\geq m+1,$ $f(z)= \frac{D^mf(0)\left(z^m\right)}{m!}+\sum_{s=N}^{\infty}\frac{D^{s}f(0)\left(z^{s}\right)}{s!}\in \mathcal{H}\left(\mathbb{D}^n, \overline{\mathbb{D}}^n\right).$ If $\frac{|D^mf_l(0)(z^m)|}{m!}=\frac{||D^mf(0)\left(z^m\right)||}{m!},$ $l=1,2,\dots n,$ then
\begin{align*}
\frac{||D^mf(0)\left(z^m\right)||}{m!}+\sum_{s=N}^{\infty}\frac{||D^{s}f(0)\left(z^{s}\right)||}{s!}\leq 1
\end{align*}	
for $||z||=r\leq r_{N,m},$ where $r_{N,m}$ is the maximal positive root of the equation $\phi_k(r)=0$ $(k=1,2,3)$ given in Lemma \ref{Lem-1.1}.	
\end{thm}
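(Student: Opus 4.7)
Fix $z\in\mathbb{D}^n$ with $\|z\|=r$, and write $P_{l,s}$ for the $s$-homogeneous component of $f_l$, so that $f_l(w)=P_{l,m}(w)+\sum_{s\geq N}P_{l,s}(w)$. Set
\[
A:=\frac{\|D^m f(0)(z^m)\|}{m!}=\max_{1\leq l\leq n}|P_{l,m}(z)|.
\]
The equimagnitude hypothesis forces $|P_{l,m}(z)|=A$ for every $l$. The plan is to reduce the multidimensional estimate to $n$ one-variable Schur-class estimates through the slice functions
\[
\psi_l(\zeta):=f_l(\zeta z/r)=P_{l,m}(z/r)\,\zeta^m+\sum_{s\geq N}P_{l,s}(z/r)\,\zeta^s,\qquad l=1,\dots,n,
\]
each of which maps $\mathbb{D}$ into $\overline{\mathbb{D}}$. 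Since $\psi_l$ vanishes at $0$ to order at least $m$, Schwarz's lemma gives $\omega_l(\zeta):=\psi_l(\zeta)/\zeta^m\in\mathcal{H}(\mathbb{D},\overline{\mathbb{D}})$ with $\omega_l(0)=P_{l,m}(z/r)$; in particular $|\omega_l(0)|=A/r^m$ \emph{uniformly} in $l$, and $A\in[0,r^m]$.

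The principal analytic input is the classical Wiener inequality $|b_k|\leq 1-|b_0|^2$ (for $k\geq 1$), valid for any $g(\zeta)=\sum_k b_k\zeta^k$ in $\mathcal{H}(\mathbb{D},\overline{\mathbb{D}})$. Applied to each $\omega_l$ and combined with the uniformity just established, this produces the $l$-independent bound $|P_{l,s}(z/r)|\leq 1-A^2/r^{2m}$ for every $l$ and every $s\geq N$. Taking the maximum over $l$, multiplying by $r^s$ (via the homogeneity identity $P_{l,s}(z/r)=P_{l,s}(z)/r^s$), and summing a geometric series, one arrives at the master inequality
\[
\frac{\|D^m f(0)(z^m)\|}{m!}+\sum_{s=N}^{\infty}\frac{\|D^s f(0)(z^s)\|}{s!}\leq A+\Bigl(1-\frac{A^2}{r^{2m}}\Bigr)\frac{r^N}{1-r}=:\Phi(A,r).
\]

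The task then reduces to checking $\Phi(A,r)\leq 1$ on $[0,r^m]$ whenever $r\leq r_{N,m}$. Since $A\mapsto\Phi(A,r)$ is a downward-opening quadratic, its unconstrained maximizer is $A^{\ast}=(1-r)/(2r^{N-2m})$. When $A^{\ast}\notin[0,r^m]$ the maximum is taken at an endpoint and is harmless: for $m\geq 1$ one has $\Phi(r^m,r)=r^m\leq 1$, while for $m=0$ the admissible endpoint is $A=1$ with $\Phi(1,r)=1$, the range of validity being precisely $\phi_1(r)\leq 0$. When $A^{\ast}\in[0,r^m]$, a direct computation gives
\[
\Phi(A^{\ast},r)=\frac{1-r}{4\,r^{N-2m}}+\frac{r^N}{1-r},
\]
and the requirement $\Phi(A^{\ast},r)\leq 1$ rearranges exactly into $\phi_2(r)\leq 0$ when $N>2m$; multiplying through by $r^{2m-N}$ to absorb the negative exponents shows that the same condition reads $\phi_3(r)\leq 0$ when $m+1\leq N\leq 2m$. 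The main technical point, beyond this algebra, is to verify that these regimes fit together continuously so as to cover the entire interval $[0,r_{N,m}]$---which is precisely why Lemma~\ref{Lem-1.1} is formulated with three branches. Sharpness of the radius should then follow by exhibiting a diagonal extremizer of the form $f(w)=(\varepsilon_1\varphi(w_1),\ldots,\varepsilon_n\varphi(w_n))'$ with $|\varepsilon_l|=1$ and $\varphi$ the one-variable Blaschke function saturating Wiener's inequality.
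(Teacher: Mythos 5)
Your proposal is correct and follows essentially the same route the paper takes: the statement itself is quoted from \cite{Liu-Liu-JMAA-2020}, but your slicing argument (restrict to $\zeta\mapsto f_l(\zeta z/r)$, divide out $\zeta^m$ via Schwarz, apply the coefficient inequality, and reduce to $\|a\|r^m+(1-\|a\|^2)r^N/(1-r)\leq 1$ with the three-case analysis of $\phi_1,\phi_2,\phi_3$) is exactly the mechanism the paper deploys for the refined Theorems \ref{Thm-2.5} and \ref{Thm-2.6}, with the plain Wiener bound $|b_k|\leq 1-|b_0|^2$ standing in for Lemma \ref{Lemm-1.2}, and your vertex computation for $\Phi(A,r)$ is the paper's completing-the-square in disguise. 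The one loose end you already flag --- checking that the endpoint regime and the $\phi_k\leq 0$ regime together cover all of $[0,r_{N,m}]$ --- is likewise left implicit in the paper's own Cases I--III.
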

Furthermore,  the Bohr inequality of norm type for \textit{symmetric function}s $f\in \mathcal{H}\left(\mathbb{D}^n,\overline{\mathbb{D}^n}\right)$ with lacunary series of the form 
\begin{align}\label{BS-eq-2.2}
	f(z)=\frac{D^mf(0)\left(z^m\right)}{m!}+\sum_{s=1}^{\infty}\frac{D^{sk+m}f(0)(z^{sk+m})}{(sk+m)!}
\end{align}
is established in \cite{Liu-Liu-JMAA-2020} and it is shown that the result is sharp. 
\begin{thm}\cite{Liu-Liu-JMAA-2020}\label{Thm-2.3}
Let $ m,k\in\mathbb{N}_0 $, $ 0\leq m\leq k $ and $ f\in \mathcal{H}\left(\mathbb{D}^n, \overline{\mathbb{D}^n}\right)$ be given by \eqref{BS-eq-2.2}. If $\frac{|D^mf_l(0)(z^m)|}{m!}=\frac{||D^mf(0)(z^m)||}{m!}$, l=1,2\dots, then 
\begin{align*}
&\frac{||D^mf(0)\left(z^m\right)||}{m!}+\sum_{s=1}^{\infty}\frac{||D^{sk+m}f(0)\left(z^{sk+m}\right)||}{(sk+m)!}\leq 1
\end{align*}
for $ ||z||=r\leq r_{k, m} $, where  $r_{k,m}$  is the maximal positive root of the equation 
\begin{align}\label{BS-eqqq-2.1}
	-6r^{k-m}+r^{2(k-m)}+8r^{2k}+1=0.
\end{align}
Each $ r_{k, m} $ is sharp.
\end{thm}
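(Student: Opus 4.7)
The plan is to reduce the multivariable inequality to a one-variable refined Bohr inequality via slicing, exploiting the symmetry-type hypothesis to make the resulting one-dimensional bounds uniform in the coordinate index. Fix $z \in \mathbb{D}^n$ with $\|z\|_\infty = r$ and set $w = z/r$, so that $\|w\|_\infty = 1$ and $\zeta w \in \mathbb{D}^n$ whenever $\zeta \in \mathbb{D}$. For each $l \in \{1,\dots,n\}$, the scalar slice $\phi_l(\zeta) := f_l(\zeta w)$ is a holomorphic self-map of $\mathbb{D}$ because $f_l$ is a coordinate of the polydisc self-map $f$. Writing $P_j^l$ for the $l$-th coordinate of the homogeneous polynomial $D^jf(0)/j!$ and using homogeneity, $\phi_l$ inherits the lacunary Taylor expansion $\phi_l(\zeta) = P_m^l(w)\zeta^m + \sum_{s\geq 1} P_{sk+m}^l(w)\zeta^{sk+m}$, and the identity $|P_j^l(w)|\,r^j = |P_j^l(z)|$ translates the series at $|\zeta|=r$ directly into the norm-type series appearing in the theorem.

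Next, extract the Schwarz factor by writing $\phi_l(\zeta) = \zeta^m \psi_l(\zeta)$ with $\psi_l \in \mathcal{H}(\mathbb{D},\overline{\mathbb{D}})$. The residual lacunary structure forces $\psi_l(\zeta) = \tilde{\psi}_l(\zeta^k)$ for a self-map $\tilde{\psi}_l:\mathbb{D}\to\overline{\mathbb{D}}$ whose Taylor coefficients are $a_0^{(l)} = P_m^l(w)$ and $a_s^{(l)} = P_{sk+m}^l(w)$ for $s \geq 1$. The hypothesis $|D^mf_l(0)(z^m)|/m! = \|D^mf(0)(z^m)\|/m!$ for every $l$ becomes $|a_0^{(l)}| = A/r^m$ with $A := \|D^mf(0)(z^m)\|/m!$; the constant term of $\tilde{\psi}_l$ thus has the same modulus for every $l$, which is the structural fact that will let us combine per-slice bounds without loss. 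Applying Lemma~\ref{Lemm-1.2} with $N=1$ (equivalently, the lacunary refined Bohr of Theorem~\ref{Thmm-2.1}) to $\tilde{\psi}_l$ at $\rho = r^k$ and multiplying through by $r^m$ yields, uniformly in $l$, an inequality of the form $\sum_{s\geq 1} |P_{sk+m}^l(z)| + Q \sum_{s\geq 1} |P_{sk+m}^l(z)|^2 / r^m \leq (1 - A^2/r^{2m})\, r^{k+m}/(1-r^k)$, with $Q = 1/(1 + A/r^m) + r^k/(1-r^k)$.

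To pass from $\sum_s |P_{sk+m}^l(z)|$ at fixed $l$ to $\sum_s \max_l |P_{sk+m}^l(z)|$, for each $s$ pick the maximizing index $l_s$ and apply the Schur coefficient bound $|a_s^{(l_s)}| \leq 1 - |a_0^{(l_s)}|^2 = 1 - A^2/r^{2m}$, uniform in $s$ precisely because the hypothesis forces $|a_0^{(l_s)}|$ to be independent of $l_s$. Feeding this uniform term-wise bound into the refined inequality above, summing the resulting geometric series, and keeping the $L^2$-correction produces a single functional inequality in $B := A/r^m \in [0,1]$ and $r$. Maximizing the left-hand side over $B$, substituting the critical value, and clearing denominators reduces the problem to the polynomial condition $-6r^{k-m} + r^{2(k-m)} + 8r^{2k} + 1 = 0$, whose maximal positive root in $(0,1)$ is by definition $r_{k,m}$.

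Sharpness is verified by exhibiting an explicit extremal $f \in \mathcal{H}(\mathbb{D}^n,\overline{\mathbb{D}^n})$ whose $l$-th coordinate is the M\"obius--Blaschke map $f_l(z) = z_l^m (z_l^k - \alpha)/(1 - \alpha z_l^k)$, with $\alpha \in [0,1)$ tuned so that all inequalities used in the proof become equalities at $r = r_{k,m}$. The main obstacle is the interchange of ``maximum over $l$'' with the summation over $s$: a naive slice produces a one-dimensional bound for each fixed coordinate that cannot be combined without losing a factor depending on $n$. It is the common-magnitude hypothesis on $D^mf(0)$ that rescues this step by making the refined one-dimensional estimate depend on $l$ only through the common value $A$, so that choosing a different maximizer $l_s$ for each $s$ does not weaken the geometric-series bound.
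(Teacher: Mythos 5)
Your overall framework --- slicing along $z_0=z/\|z\|$, factoring out $\zeta^m$ and substituting $\mu=\zeta^k$, and using the hypothesis that all coordinates of $D^mf(0)$ have a common modulus to make the per-slice bounds uniform in $l$ --- is exactly the technique this paper uses for its Theorems \ref{Thm-2.5}--\ref{BS-thm-2.7} (the theorem you were asked to prove is itself only quoted from \cite{Liu-Liu-JMAA-2020}, so the closest in-paper comparison is the proof of Theorem \ref{BS-thm-2.7}). However, there is a genuine gap in your final step. The claim that maximizing $Br^m+(1-B^2)\,r^{k+m}/(1-r^k)$ over $B\in[0,1]$ ``reduces the problem to the polynomial condition $-6r^{k-m}+r^{2(k-m)}+8r^{2k}+1=0$'' is false for $m\geq 1$. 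Carrying out that maximization (critical point $B^*=(1-r^k)/(2r^k)$) and clearing denominators gives instead $4r^{2k}+(1-r^k)^2=4(1-r^k)r^{k-m}$, i.e.\ $5r^{2k}-2r^k+1-4r^{k-m}+4r^{2k-m}=0$. The two equations agree only when $m=0$. For $m=k$ your equation gives $r^k=3/5$, whereas \eqref{BS-eqqq-2.1} gives $r^k=1/\sqrt{2}$; since the theorem asserts sharpness at $r_{k,m}$, your argument provably cannot reach the stated radius. The point is that the term-wise Wiener bound $|c_s|\leq 1-|c_0|^2$ summed as a geometric series is not enough: the Kayumov--Ponnusamy radius is obtained by combining it with the Parseval/Cauchy--Schwarz estimate $\sum_{s\geq 1}|c_s|\rho^s\leq\sqrt{1-|c_0|^2}\,\rho/\sqrt{1-\rho^2}$ and a case analysis over $|c_0|$.

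This missing ingredient also undercuts the device you propose for the max-versus-sum interchange. Choosing a maximizing index $l_s$ for each $s$ works fine with the \emph{term-wise} bound $|c_s^{(l_s)}|\leq 1-|c_0^{(l_s)}|^2=1-A^2/r^{2m}$, but the Cauchy--Schwarz estimate needed to reach $r_{k,m}$ rests on the \emph{global} per-coordinate bound $\sum_{s}|c_s^{(l)}|^2\leq 1-|c_0^{(l)}|^2$, and $\sum_s\max_l|c_s^{(l)}|^2$ is not controlled by $\max_l\sum_s|c_s^{(l)}|^2$ without losing a factor depending on $n$. The common-modulus hypothesis equalizes the constant terms but does not resolve this; the reduction to one variable must be organized so that the $\ell^2$ inequality is applied to a single scalar function (as in the source's argument), not re-assembled term by term across coordinates. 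Separately, your references to ``keeping the $L^2$-correction'' conflate this unrefined statement with the refined Theorem \ref{BS-thm-2.7}, whose radius $R_{k,m}(|c_0|)$ is a different, $|c_0|$-dependent quantity (e.g.\ $R_{k,k}(1/3)^k=3/5<1/\sqrt{2}=r_{k,k}^k$), so the refined inequality does not imply the present theorem at the radius $r_{k,m}$.
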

In recent years, refining the Bohr-type inequalities have been an active research topic. Many researchers continuously investigated refined Bohr-type inequalities for a certain class of analytic functions, for classes of harmonic mappings on the unit disk $ \mathbb{D} $, or on the shifted disk $ \Omega_{\gamma}:=\{z\in\mathbb{C} : |z+\frac{\gamma}{1-\gamma}|<\frac{1}{1-\gamma}\} $ where $ 0\leq\gamma<1 $, for operator-valued functions. For detailed information on such studies, the readers are referred to  (see\cite{Aha-Aha-CMFT-2023,Aha-All-Him-AASFM-2022,Allu-CMB-2022,Evd-Pon-Ras-RM-2021,Liu-Ponnusamy-PAMS-2021,Liu-Ponnusamy-Wang-RACSAM-2020,Liu-Liu-Ponnusamy-2021,Ponnusamy-JMAA-2022} and references therein). No one has yet explored what could be the refined version of the Bohr inequality for holomorphic functions in $ \mathbb{C}^n $. Inspired by the methods in \cite{Liu-Liu-JMAA-2020}, we are interested to investigate a refined version of the Bohr inequality for holomorphic functions in $ \mathbb{C}^n $ and also to establish their sharpness. Henceforward, the following questions arise naturally.
\begin{ques}\label{BS-qn-2.1}
Can we establish a refined version of Theorems \ref{Thm-2.1} to \ref{Thm-2.3}? Can we show them sharp keeping the radius unchanged? 
\end{ques}
To answer Question \ref{BS-qn-2.1}, we shall establish refined Bohr inequality of norm type for holomorphic mappings with lacunary series on the unit polydisk in $ \mathbb{C}^n $ under the restricted conditions that are being considered in \cite{Liu-Liu-JMAA-2020}. In fact,  we show in Theorems \ref{Thm-2.5} and \ref{Thm-2.6} that the constants $ r_{1,0}={1}/{3} $ and $ r_{2, 1}={3}/{5} $ both are optimal.  With the help of the Lemmas \ref{Lem-1.1} and \ref{Lemm-1.2}, we obtain the following result as sharp refinements of Theorem \ref{Thm-2.1}. 
\begin{thm}\label{Thm-2.5}
Let $m\in\mathbb{N}_0,$  $a=(a_1,a_2,\dots, a_n)^{\prime},$ $N\geq m+1,$ $f\in \mathcal{H}\left(U^n, \mathbb{C}^n\right)$ be given by \eqref{BS-eq-2.1}, $|a_l|=||a||=\max_{1\leq l\leq n}{|a_l|},$ $l=1,2,\dots, n$ and $a_jz^m_j+g_j(z)\in \mathcal{H}\left(\mathbb{D}^n, \overline{\mathbb{D}^n}\right),$ where $\frac{D^mf(0)\left(z^m\right)}{m!}=(a_1z^m_1,a_2z_2^m,\dots,a_nz^m_n)^{\prime}, $ and $j$ satisfies $|z_j|=||z||=\max_{1\leq l\leq n}{|z_l|}.$ Then 
\begin{align*}
\mathcal{A}^f_{m}(||z||)&:=\dfrac{||D^mf(0)\left(z^m\right)||}{m!}+\sum_{s=N}^{\infty}\dfrac{||D^{s}f(0)\left(z^{s}\right)||}{s!}+sgn(t)\sum_{s=1}^{t}\left(\dfrac{||D^{s}f(0)\left(z^{s}\right)||}{s!}\right)^2\dfrac{||z||^{N-2s}}{1-||z||}\\&\quad+\bigg(\dfrac{||z||^m}{||z||^m+\frac{||D^mf(0)\left(z^m\right)||}{m!}}+\frac{||z||}{1-||z||}\bigg)\sum_{s=t+1}^{\infty}\left(\dfrac{||D^{s}f(0)\left(z^{s}\right)||}{s!}\right)^2\leq 1
\end{align*}	
for $||z||=r\leq r_{N,m},$  where $t=\lfloor{(N-1)/2}\rfloor$ and $r_{N,m}$ is the maximal positive root of the equation $\phi_k(r)=0$ $(k=1,2,3)$ are given in Lemma \ref{Lem-1.1}.
\end{thm}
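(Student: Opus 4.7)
The overall strategy is to adapt the slicing argument used to prove Theorem~\ref{Thm-2.1} in \cite{Liu-Liu-JMAA-2020}, replacing the classical Schwarz-type bound by the refined one-variable inequality of Lemma~\ref{Lemm-1.2}. The extra quadratic terms appearing in $\mathcal{A}_m^f(\|z\|)$ will then emerge as a direct consequence of the refinement already built into Lemma~\ref{Lemm-1.2}.

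Fix $z\in\mathbb{D}^n$ with $\|z\|=r$ and choose an index $j$ with $|z_j|=r$. Consider the one-variable slice
$$\varphi(\zeta)\;:=\;F_j\!\left(\tfrac{\zeta}{r}\,z\right)\;=\;\hat a_m\,\zeta^m+\sum_{s\ge N}\hat c_s\,\zeta^s,\qquad|\zeta|<1,$$
where $F_j(w)=a_jw_j^m+g_j(w)$, $\hat a_m=a_j(z_j/r)^m$ and $\hat c_s=D^sg_j(0)(z^s)/(s!\,r^s)$. Because $\|(\zeta/r)z\|_\infty\le|\zeta|$ and $F_j\in\mathcal{H}(\mathbb{D}^n,\overline{\mathbb{D}})$, the function $\varphi$ is a holomorphic self-map of $\overline{\mathbb{D}}$. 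The multidimensional quantities at the slice reduce to $A:=\|D^mf(0)(z^m)\|/m!=|\hat a_m|r^m$ and $B_s:=\|D^sf(0)(z^s)\|/s!=|\hat c_s|r^s$ for $s\ge N$, exactly as in the reduction used in the proof of Theorem~\ref{Thm-2.1}.

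The central step is to apply Lemma~\ref{Lemm-1.2} to $\varphi$ at $\rho=r$. When $m=0$ one uses $|a_0(\varphi)|=A$, and the refinement coefficient $\frac{1}{1+|a_0|}+\frac{r}{1-r}$ from Lemma~\ref{Lemm-1.2} matches precisely the factor $\frac{r^m}{r^m+A}+\frac{r}{1-r}$ in the statement; combined with the slack $(1-A^2)\frac{r^N}{1-r}$ on the right of Lemma~\ref{Lemm-1.2}, the inequality reduces to $\phi_1(r)=2r^N+r-1\le 0$. When $m\ge 1$, one uses $|a_0(\varphi)|=0$; since the only nonzero Taylor indices of $\varphi$ are $m$ and those $\ge N$, the $\mathrm{sgn}(t)$-sum of Lemma~\ref{Lemm-1.2} receives the contribution $|\hat a_m|^2\frac{r^N}{1-r}=A^2\frac{r^{N-2m}}{1-r}$ exactly when $m\le t=\lfloor(N-1)/2\rfloor$ (that is, $N>2m$, producing $\phi_2$), whereas the $\sum_{s>t}$ refinement sum absorbs $|\hat a_m|^2r^{2m}=A^2$ when $m>t$ (that is, $m+1\le N\le 2m$, producing $\phi_3$). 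The identity $\frac{r^m}{r^m+A}=\frac{1}{1+|\hat a_m|}$ together with $\frac{1}{1+|\hat a_m|}\le 1$ permits the refinement coefficient in Theorem~\ref{Thm-2.5} to be dominated by the one produced by Lemma~\ref{Lemm-1.2}, yielding the stated inequality.

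The chief obstacle lies in the three-way bookkeeping among the regimes $\phi_1,\phi_2,\phi_3$: one has to verify that the Schwarz slack $(1-|\hat a_m|^2)\frac{r^N}{1-r}$ on the right of Lemma~\ref{Lemm-1.2}, combined with the leading term $A$ and the two quadratic correction sums, reproduces $\phi_k(r)\le 0$ with the same maximal positive root as in Lemma~\ref{Lem-1.1}, so that the sharp radius $r_{N,m}$ is recovered rather than a strictly smaller one. Sharpness of $r_{1,0}=1/3$ is then verified by the classical M\"obius extremal function of the original Bohr theorem.
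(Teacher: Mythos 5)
Your overall strategy---slice along $z_0=z/\|z\|$, reduce to a one-variable self-map of the disk, and invoke Lemma \ref{Lemm-1.2}---is the same as the paper's, and your $m=0$ case is fine. The gap is in how you apply the lemma when $m\ge 1$. You take $|a_0(\varphi)|=0$, so the right-hand side of Lemma \ref{Lemm-1.2} becomes $(1-0)\frac{r^N}{1-r}=\frac{r^N}{1-r}$ and the refinement coefficient becomes $1+\frac{r}{1-r}$. The quadratic contribution of $\hat a_m$ that you point to (in the $\mathrm{sgn}(t)$-sum or in the tail sum) appears on the left of the lemma's inequality \emph{and} inside $\mathcal{A}_m^f$ itself with a coefficient that is no larger, so it cancels out of the comparison instead of improving it; all you can conclude is $\mathcal{A}_m^f\le \|a\|r^m+\frac{r^N}{1-r}$. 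That bound is too weak to reach $r_{N,m}$: for $N=2$, $m=1$, $r=r_{2,1}=3/5$ and $\|a\|\to 1$ it gives $\frac{3}{5}+\frac{9/25}{2/5}=\frac{3}{2}>1$. The indispensable step, which the paper uses, is to deflate the slice: write $\varphi(\zeta)=\zeta^m\omega(\zeta)$ with $\omega(\zeta)=\hat a_m+\sum_{s\ge N}\hat c_s\zeta^{s-m}\in\mathcal{H}(\mathbb{D},\overline{\mathbb{D}})$, so that $|\hat a_m|=\|a\|$ plays the role of $|a_0|$ in Lemma \ref{Lemm-1.2}. This produces the Wiener-type slack $(1-\|a\|^2)\frac{r^N}{1-r}$ on the right and the coefficient $\frac{1}{1+\|a\|}+\frac{r}{1-r}$ on the left, whence $\mathcal{A}_m^f\le\mathcal{M}_f(\|a\|,r):=\|a\|r^m+(1-\|a\|^2)\frac{r^N}{1-r}$, and the factor $1-\|a\|^2$ is exactly what saves the estimate as $\|a\|\to 1$.

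Relatedly, you misattribute the origin of $\phi_2$ and $\phi_3$: they do not arise from whether the index $m$ falls in the $\mathrm{sgn}(t)$-sum or in the tail sum. They come from completing the square in $\|a\|$ in $\mathcal{M}_f(\|a\|,r)$, namely
\begin{equation*}
\mathcal{M}_f(\|a\|,r)=1-\frac{r^N}{1-r}\left(\|a\|-\frac{1-r}{2r^{N-m}}\right)^2+\frac{\phi_2(r)}{4(1-r)r^{N-2m}}\quad\text{when } N>2m,
\end{equation*}
with the analogous identity involving $\phi_3$ when $m+1\le N\le 2m$ (the case split only governs how the leftover term is normalized), while $m=0$ uses $1-\|a\|^2\le 2(1-\|a\|)$ to reduce to $\phi_1$. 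Finally, your sharpness remark covers only $r_{1,0}=1/3$; the paper also verifies $r_{2,1}=3/5$ by placing $f_1(z_1)=z_1(a-z_1)/(1-az_1)$ in one coordinate of a map into $\mathbb{C}^n$ and letting $a\to 1^-$.
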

The second result we obtain is the following and it is established as a refined version of Theorem \ref{Thm-2.2}. 
\begin{thm}\label{Thm-2.6}
Let $m\in\mathbb{N}_0,$  $N\geq m+1,$ $f(z)= \frac{D^mf(0)\left(z^m\right)}{m!}+\sum_{s=N}^{\infty}\frac{D^{s}f(0)\left(z^{s}\right)}{s!}\in \mathcal{H}\left(\mathbb{D}^n, \overline{\mathbb{D}}^n\right).$ If $\frac{|D^mf_l(0)(z^m)|}{m!}=\frac{||D^mf(0)\left(z^m\right)||}{m!},$ $l=1,2,\dots n,$ then
\begin{align*}
\mathcal{B}^f_{m}(r)&:=\frac{||D^mf(0)\left(z^m\right)||}{m!}+\sum_{s=N}^{\infty}\frac{||D^{s}f(0)\left(z^{s}\right)||}{s!}+sgn(t)\sum_{s=1}^{t}\left(\frac{||D^{s}f(0)\left(z^{s}\right)||}{s!}\right)^2\dfrac{||z||^{N-2s}}{1-||z||}\\&\quad+\bigg(\frac{||z||^m}{||z||^m+\frac{||D^mf(0)\left(z^m\right)||}{m!}}+\frac{||z||}{1-||z||}\bigg)\sum_{s=t+1}^{\infty}\left(\frac{||D^{s}f(0)\left(z^{s}\right)||}{s!}\right)^2\leq 1
\end{align*}	
for $||z||=r\leq r_{N,m},$ where $t=\lfloor{(N-1)/2}\rfloor$ and  $r_{N,m}$ is the maximal positive root of the equation $\phi_k(r)=0$ $(k=1,2,3)$ given in Lemma \ref{Lem-1.1}.	
\end{thm}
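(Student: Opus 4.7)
My plan is to adapt the slicing argument used in \cite{Liu-Liu-JMAA-2020} to prove Theorem \ref{Thm-2.2} and, in place of the classical Bohr input, feed in the refined one-variable Bohr inequality of Lemma \ref{Lemm-1.2}, so that the extra quadratic terms in $\mathcal{B}^f_m(r)$ come out as natural outputs. The refined coefficient $\tfrac{\|z\|^m}{\|z\|^m+\|D^mf(0)(z^m)\|/m!}$ will emerge from the Schwarz-normalized slice together with the factor $\tfrac{1}{1+|a_0|}$ appearing in Lemma \ref{Lemm-1.2}.

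First, I fix $z\in\mathbb{D}^n$ with $\|z\|=r\le r_{N,m}$ and, for each $l\in\{1,\ldots,n\}$, form the slice $\phi_l(\zeta):=f_l(\zeta z/r)\in \mathcal{H}(\mathbb{D},\overline{\mathbb{D}})$, whose lacunary expansion reads $\phi_l(\zeta)=b_l\zeta^m+\sum_{s\ge N}c_s^{(l)}\zeta^s$ with $b_l=D^mf_l(0)((z/r)^m)/m!$ and $c_s^{(l)}=D^sf_l(0)((z/r)^s)/s!$. Writing $\beta_s:=\|D^sf(0)(z^s)\|/s!$, the hypothesis forces $|b_l|=\beta_m/r^m$ to be independent of $l$, while $\beta_s=\max_l|c_s^{(l)}|\,r^s$ for $s\ge N$.

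Next, I would apply Lemma \ref{Lemm-1.2} in two complementary ways. Applied directly to $\phi_l$ with parameter $N$, the only nonzero coefficient of $\phi_l$ in degrees $1,\ldots,N-1$ is the one at degree $m$, of modulus $\beta_m/r^m$, so the middle sum of Lemma \ref{Lemm-1.2} contributes exactly $sgn(t)\,\beta_m^{\,2}\,r^{N-2m}/(1-r)$ when $m\le t$, matching the middle piece of $\mathcal{B}^f_m(r)$. To capture the refined tail coefficient, I would apply Lemma \ref{Lemm-1.2} instead to the Schwarz-normalized slice $\tilde\phi_l(\zeta):=\zeta^{-m}\phi_l(\zeta)=b_l+\sum_{s\ge N}c_s^{(l)}\zeta^{s-m}$, which lies in $\mathcal{H}(\mathbb{D},\overline{\mathbb{D}})$ by Schwarz's lemma with constant term $b_l$; with parameter $N-m$ its middle sum collapses, and after multiplying through by $r^m$ one obtains the factor $\tfrac{1}{1+|b_l|}+\tfrac{r}{1-r}=\tfrac{r^m}{r^m+\beta_m}+\tfrac{r}{1-r}$ in front of the tail quadratic sum. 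Aggregating the resulting slicewise inequalities, by choosing for each $s\ge N$ the index $l_s$ realizing $|c_s^{(l_s)}|\,r^s=\beta_s$ and exploiting the $l$-independence of the refinement coefficient, reduces $\mathcal{B}^f_m(r)\le 1$ to a polynomial inequality in $r$ and $\beta_m/r^m$; after optimizing over $\beta_m\in[0,r^m]$ and separating the three regimes $m=0$, $N>2m$, and $m+1\le N\le 2m$, this inequality is exactly $\phi_k(r)\le 0$ for the corresponding $k\in\{1,2,3\}$ in Lemma \ref{Lem-1.1}, and therefore holds for $r\le r_{N,m}$.

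The main obstacle is the aggregation step: the one-variable Lemma \ref{Lemm-1.2} produces slicewise bounds on $|c_s^{(l)}|$ and $|c_s^{(l)}|^{2}$, but the target quantity $\beta_s=\max_l|c_s^{(l)}|\,r^s$ intertwines maxima with summation, and these two operations do not commute. The hypothesis $|D^mf_l(0)(z^m)|/m!=\|D^mf(0)(z^m)\|/m!$ for every $l$ is exactly what rescues the argument, rendering $|b_l|$ and hence the refinement coefficient independent of $l$, so that the degreewise choice of $l_s$ can glue the componentwise estimates into a single norm-type bound without losing the sharp radius $r_{N,m}$.
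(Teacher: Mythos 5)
Your route is the paper's route: slice along $z_0=z/\|z\|$, feed the slice into Lemma \ref{Lemm-1.2}, pass to norms, and reduce everything to the scalar bound $\|a\|r^m+(1-\|a\|^2)r^N/(1-r)\le 1$, which is then disposed of by the three-case comparison with $\phi_1,\phi_2,\phi_3$ of Lemma \ref{Lem-1.1} (the paper simply refers back to the proof of Theorem \ref{Thm-2.5} for that last step). The problem is that the two intermediate steps you flesh out do not close. The ``two complementary applications'' of Lemma \ref{Lemm-1.2} cannot be merged into the single slicewise inequality you need. For $m\ge 1$ the slice $\phi_l$ has constant term $0$, so the direct application with parameter $N$ does place $sgn(t)\,\beta_m^2 r^{N-2m}/(1-r)$ on the left, but only against the budget $(1-|a_0|^2)r^N/(1-r)=r^N/(1-r)$ and with the unrefined tail coefficient $1+r/(1-r)$; the application to $\tilde{\phi}_l$ gives the refined tail coefficient and the smaller budget $(1-|b_l|^2)r^N/(1-r)$, but its middle sum is empty. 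The estimate you want takes the left-hand side of the first together with the right-hand side of the second, and that is not implied by having both separately: for $1\le m\le t$ it is equivalent to $\sum_{s\ge N}|c_s^{(l)}|r^s+(\cdots)\sum_{s\ge N}|c_s^{(l)}|^2r^{2s}\le(1-2|b_l|^2)r^N/(1-r)$, whose right side is negative for $|b_l|>1/\sqrt{2}$ while the left side is not (try $\phi_l(\zeta)=\zeta(a+\zeta^2)/(1+a\zeta^2)$ with $m=1$, $N=3$, $a$ near $1$). This is exactly the point where the paper's own proof asserts the combined slicewise inequality ``in view of Lemma \ref{Lemm-1.2}'' without deriving it, so you have not lost ground relative to the paper, but you have not supplied the missing argument either, and your explicit two-application scheme makes the obstruction visible rather than removing it.

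The aggregation step is likewise not rescued by choosing $l_s$ degreewise. Each application of Lemma \ref{Lemm-1.2} bounds the \emph{entire} sum over $s$ for one fixed component $l$; since the functional is increasing in every $|c_s^{(l)}|$, what you obtain is $\max_l\sum_s(\cdots)\le M$, whereas the norm statement requires $\sum_s\max_l(\cdots)\le M$, and $\sum_s\max_l\ge\max_l\sum_s$ in general. Invoking the inequality for the component $l_s$ controls the $s$-th term only together with all the other terms of that same component, so the mixed sum $\sum_s|c_s^{(l_s)}|r^s$ is not controlled by any single slicewise inequality. The hypothesis that $|b_l|$ is independent of $l$ makes the right-hand budget uniform in $l$, but it does not commute the maximum with the summation. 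For the unrefined Theorem \ref{Thm-2.2} this is repaired by \emph{termwise} Schwarz--Pick coefficient bounds ($|c_s^{(l)}|\le 1-|b_l|^2$ for every $l$ and $s$, then sum the geometric series); you would need, and do not provide, an analogous termwise mechanism that also survives the quadratic terms of the refined inequality. The paper glosses over this same point with an appeal to the maximum modulus principle.
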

The final result of this section we obtain an analogue of Theorem \ref{Thmm-2.1}  which is a refined version of Theorem \ref{Thm-2.3}.
\begin{thm}\label{BS-thm-2.7}
Let $ m,k\in\mathbb{N}_0 $, $ 0\leq m\leq k $ and $ f\in \mathcal{H}\left(\mathbb{D}^n, \overline{\mathbb{D}^n}\right) $ be given by \eqref{BS-eq-2.2}. If $\frac{|D^mf_l(0)(z^m)|}{m!}=\frac{||D^mf(0)(z^m)||}{m!}$, l=1,2\dots, then  
\begin{align*}
\mathcal{C}^f_m(||z||):=&\frac{||D^mf(0)\left(z^m\right)||}{m!}+\sum_{s=1}^{\infty}\frac{||D^{sk+m}f(0)\left(z^{sk+m}\right)||}{(sk+m)!}+\bigg(\frac{1}{||z||^m+\frac{||D^mf(0)\left(z^m\right)||}{m!}}\\&\quad+\frac{||z||^{k-m}}{1-||z||^k}\bigg)\sum_{s=1}^{\infty}\left(\frac{||D^{sk+m}f(0)\left(z^{sk+m}\right)||}{(sk+m)!}\right)^2\leq 1
\end{align*}
for $ ||z||=r\leq R_{k, m}\left(|c_0|\right) $, where $ |c_0|=\frac{||D^mf(0)\left(z_0^m\right)||}{m!} $ and  $ R_{k, m}\left(|c_0|\right) $  is the maximal positive root of the equation 
\begin{align}\label{ee-1.2}
\left(1-|c_0|-|c_0|^2\right)r^{m+k}+r^k+|c_0|r^m-1=0.
\end{align}
Each $ R_{k, m}\left(|c_0|\right) $ is sharp. 
\end{thm}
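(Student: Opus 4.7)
The plan is to reduce the multi-variable statement to the one-variable refined lacunary Bohr inequality of Theorem \ref{Thmm-2.1} (applied with $p = k$) by slicing the polydisk along complex lines, and then to upgrade the resulting component-wise estimates to the norm-level inequality prescribed by $\mathcal{C}^f_m(||z||)$.

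First I would fix $z_0 \in \mathbb{D}^n$ with $||z_0|| = r > 0$ and set $w_0 := z_0/r \in \overline{\mathbb{D}^n}$. For each coordinate $l = 1, \dots, n$, introduce the one-variable slice
\[
g_l(\lambda) := f_l(\lambda w_0), \qquad \lambda \in \mathbb{D}.
\]
Because $f_l : \mathbb{D}^n \to \overline{\mathbb{D}}$ and $||w_0|| = 1$, the slice $g_l$ lies in $\mathcal{H}(\mathbb{D}, \overline{\mathbb{D}})$, and the lacunary structure \eqref{BS-eq-2.2} is inherited as $g_l(\lambda) = \sum_{s \ge 0} \beta_{l,s}\, \lambda^{sk+m}$ with
\[
\beta_{l,s} \;=\; \frac{D^{sk+m} f_l(0)(z_0^{sk+m})}{(sk+m)!\, r^{sk+m}}.
\]
The symmetry hypothesis $|D^m f_l(0)(z^m)|/m! = ||D^m f(0)(z^m)||/m!$ forces $|\beta_{l,0}|$ to be independent of $l$, equal to $|c_0|/r^m$; this common leading modulus is the mechanism that allows the one-variable estimates for different coordinates to be combined.

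Next I would apply Theorem \ref{Thmm-2.1} to each $g_l$ with the common leading modulus $|\beta_{l,0}| = |c_0|/r^m$. A direct computation shows that the threshold radius from Theorem \ref{Thmm-2.1}, after clearing the factor $r^m$, coincides with the maximal positive root $R_{k,m}(|c_0|)$ of \eqref{ee-1.2}, so the one-variable inequality holds for every $l$ whenever $r \le R_{k,m}(|c_0|)$. Since $|\beta_{l,s}|\, r^{sk+m} = |D^{sk+m} f_l(0)(z_0^{sk+m})|/(sk+m)!$, taking the maximum over $l$ in the linear and quadratic sums converts them into the norm-based quantities $||D^{sk+m} f(0)(z_0^{sk+m})||/(sk+m)!$, while the prefactor $\frac{1}{1+|\beta_{l,0}|} + \frac{r^k}{1-r^k}$, itself independent of $l$ by the symmetry hypothesis, becomes the prefactor recorded in $\mathcal{C}^f_m(||z||)$ after the substitution $|\beta_{l,0}| = |c_0|/r^m$ and the attendant algebraic simplification.

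The main obstacle I anticipate is the aggregation of these component-wise estimates into a single norm-level inequality: because the index $l_s$ attaining $||D^{sk+m} f(0)(z_0^{sk+m})|| = |D^{sk+m} f_{l_s}(0)(z_0^{sk+m})|$ may vary with $s$, one cannot simply apply Theorem \ref{Thmm-2.1} to a fixed coordinate. The resolution is to exploit that the one-variable inequality holds uniformly in $l$ with the same prefactor and the same leading term $|c_0|$, so that the estimate survives when the $l$-dependent moduli are replaced term by term by their maxima. Finally, for sharpness, I would test with the diagonal map $f^\star(z) = (\varphi(z_1), \dots, \varphi(z_n))$, where $\varphi \in \mathcal{H}(\mathbb{D}, \overline{\mathbb{D}})$ is the extremal lacunary M\"obius-type self-map attaining equality in Theorem \ref{Thmm-2.1} with initial modulus $|c_0|$; evaluating $f^\star$ on the diagonal $z = (r, \dots, r)$ at $r = R_{k,m}(|c_0|)$ saturates every inequality in the chain, demonstrating that the radius cannot be enlarged.
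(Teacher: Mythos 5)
Your proposal follows essentially the same route as the paper's proof: restrict $f$ to complex lines $\lambda\mapsto f_l(\lambda z_0)$ through the origin, apply the one-variable refined lacunary Bohr inequality to each slice (the paper invokes Lemma \ref{Lemm-1.2} with $N=1$ after the substitution $\mu=\lambda^k$, which is equivalent to your direct appeal to Theorem \ref{Thmm-2.1} with $p=k$), use the hypothesis that every component shares the leading modulus $|c_0|$ to pass to the norm-level inequality, and certify sharpness with the M\"obius-type extremal $z_j^m(a-z_j^k)/(1-az_j^k)$ evaluated at a point of norm $r$. The aggregation step you flag --- that the index attaining $\|D^{sk+m}f(0)(z^{sk+m})\|$ may vary with $s$ --- is treated no more explicitly in the paper (which passes to norms ``by the maximum modulus principle'' in a single line) than in your sketch, so your attempt matches the paper's argument in both structure and level of rigour.
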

Setting $ \frac{D^mf_l(0)(z^m)}{m!}=a_lz_l^m $, $ l=1, 2, \ldots, n $, we have the following result as a consequence of Theorem \ref{BS-thm-2.7}.
\begin{cor}\label{BS-cor-2.1}
	Let $ m, k\in\mathbb{N} $, $ a=(a_1, a_2, \ldots, a_n)^{\prime} $, $ 0\leq m\leq k $, 
	\begin{align*}
	f(z)&=\left(a_1z_1^m+g_1(z), a_2z_2^m+g_2(z), \ldots, a_nz_n^m+g_n(z)\right)^{\prime}\\&=\frac{D^mf(0)(z^m)}{m!}+\sum_{s=1}^{\infty}\frac{D^{sk+m}f(0)(z^{sk+m})}{(sk+m)!}\in\mathcal{H}\left(\mathbb{D}^n, \overline{\mathbb{D}^n}\right),
	\end{align*}
$ |a_1z_1^m|=\cdots=|a_nz_n^m| $, where $ \frac{D^mf(0)(z^m)}{m!}=\left(a_1z_1^m, a_2z_2^m, \ldots, a_nz_n^m\right)^{\prime} $. Then $ \mathcal{C}^f_m(||z||)\leq 1 $ for $ ||z||=r\leq R_{k, m}\left(|c_0|\right) $, where $ R_{k, m}\left(|c_0|\right) $  is the maximal positive root of \eqref{ee-1.2}. Each $ R_{k, m}\left(|c_0|\right) $ is sharp. 
\end{cor}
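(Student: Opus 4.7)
The plan is straightforward: Corollary \ref{BS-cor-2.1} is a direct specialization of Theorem \ref{BS-thm-2.7}, and the real work is only in recognizing that the symmetry hypothesis of that theorem is already built into the diagonal form of $f$ assumed in the corollary.

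First, I would unpack the homogeneous term. From $\frac{D^mf(0)(z^m)}{m!}=(a_1z_1^m,\ldots,a_nz_n^m)^{\prime}$, the $l$-th scalar component satisfies $\frac{|D^mf_l(0)(z^m)|}{m!}=|a_lz_l^m|$. With the sup-norm convention on $\overline{\mathbb{D}^n}$ (the natural one in this polydisk setting), $\frac{||D^mf(0)(z^m)||}{m!}=\max_{1\le l\le n}|a_lz_l^m|$. The standing hypothesis $|a_1z_1^m|=\cdots=|a_nz_n^m|$ then forces
\[
\frac{|D^mf_l(0)(z^m)|}{m!}=\frac{||D^mf(0)(z^m)||}{m!},\quad l=1,2,\ldots,n,
\]
which is precisely the condition required by Theorem \ref{BS-thm-2.7}. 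Applying that theorem immediately yields $\mathcal{C}^f_m(||z||)\le 1$ for $||z||=r\le R_{k,m}(|c_0|)$ with $R_{k,m}(|c_0|)$ the maximal positive root of \eqref{ee-1.2}; no additional estimation is needed for the inequality part.

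For sharpness, I would carry over the extremal map from the sharpness proof of Theorem \ref{BS-thm-2.7}, and verify it can be taken to live inside the diagonal subclass of the corollary. The natural candidate is the vector-valued function $f(z)=(\phi(z_1),\phi(z_2),\ldots,\phi(z_n))^{\prime}$ built from a single scalar $\phi(w)=aw^m+\sum_{s\ge 1}c_{sk+m}w^{sk+m}$ which saturates the one-variable refined Bohr-type inequality of Theorem \ref{Thmm-2.1} (and hence of Theorem \ref{BS-thm-2.7}), tested along the diagonal $z=(r,r,\ldots,r)$ at $r=R_{k,m}(|c_0|)$. In this configuration $|a_1z_1^m|=\cdots=|a_nz_n^m|=|a|r^m$ is automatic, the membership $a_lz_l^m+g_l\in\mathcal{H}(\mathbb{D}^n,\overline{\mathbb{D}})$ is preserved from the scalar setting, and the series for $\mathcal{C}^f_m$ reduces coordinatewise to the scalar series that attains equality in the one-variable refined Bohr identity at $r=R_{k,m}(|c_0|)$.

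The only non-routine point is the sharpness step: one must check that the scalar extremal used in Theorem \ref{BS-thm-2.7} (a Möbius-type function whose coefficient of $w^m$ has modulus $|c_0|$ and whose subsequent tail solves the defining polynomial \eqref{ee-1.2}) really does lift to a diagonal vector function that still realizes the norm equality $|a_1z_1^m|=\cdots=|a_nz_n^m|$ at the critical radius. This is immediate in the present set-up, since all $a_l$ can be taken equal to the scalar coefficient $a$ and the evaluation point is chosen on the diagonal; hence the extremal for Theorem \ref{BS-thm-2.7} lies inside the narrower class of the corollary, and $R_{k,m}(|c_0|)$ is sharp there as well.
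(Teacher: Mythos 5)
Your proposal is correct and follows essentially the same route the paper intends: the corollary is obtained by observing that the diagonal hypothesis $|a_1z_1^m|=\cdots=|a_nz_n^m|$ yields the condition $\frac{|D^mf_l(0)(z^m)|}{m!}=\frac{||D^mf(0)(z^m)||}{m!}$ of Theorem \ref{BS-thm-2.7}, and sharpness is inherited from the extremal map $f_a(z)=\bigl(z_1^m\frac{a-z_1^k}{1-az_1^k},\ldots,z_n^m\frac{a-z_n^k}{1-az_n^k}\bigr)$ already used there, which lies in the corollary's diagonal subclass. Your choice of testing along the diagonal $z=(r,\ldots,r)$ is a minor (and arguably cleaner) variation on the paper's test point $z=(z_1,0,\ldots,0)$, but the argument is the same in substance.
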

\begin{cor}
We note that Corollary \ref{BS-cor-2.1} is a sharp refined version of Corollary 2.1 in \cite{Liu-Liu-JMAA-2020}.
\end{cor}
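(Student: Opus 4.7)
The plan is to verify three elementary points that together make the refinement precise, rather than carry out a new calculation. First, I would identify the content of Corollary 2.1 of \cite{Liu-Liu-JMAA-2020}: it is the specialization of Theorem \ref{Thm-2.3} to the hypotheses of Corollary \ref{BS-cor-2.1}, and it gives the classical Bohr-type bound
$$\frac{||D^mf(0)(z^m)||}{m!}+\sum_{s=1}^{\infty}\frac{||D^{sk+m}f(0)(z^{sk+m})||}{(sk+m)!}\leq 1$$
on the disk $||z||\leq r_{k,m}$, with $r_{k,m}$ the maximal positive root of \eqref{BS-eqqq-2.1}.

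Second, I would observe that the quantity $\mathcal{C}^f_m(||z||)$ appearing in Corollary \ref{BS-cor-2.1} equals the left-hand side above plus the additional summand
$$\Bigl(\frac{1}{||z||^m+\frac{||D^mf(0)(z^m)||}{m!}}+\frac{||z||^{k-m}}{1-||z||^k}\Bigr)\sum_{s=1}^{\infty}\Bigl(\frac{||D^{sk+m}f(0)(z^{sk+m})||}{(sk+m)!}\Bigr)^2,$$
in which the bracketed coefficient and the series of squares are both non-negative for $||z||\in(0,1)$. Consequently the bound $\mathcal{C}^f_m(||z||)\leq 1$ is a genuine strengthening of the Liu--Liu inequality on any disk where it applies.

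Third, I would compare the two radii and invoke sharpness. By a direct sign analysis of the polynomial in \eqref{ee-1.2} evaluated at $r=r_{k,m}$, using \eqref{BS-eqqq-2.1} to eliminate the leading terms and exploiting monotonicity in $|c_0|$, one verifies $R_{k,m}(|c_0|)\geq r_{k,m}$ for all admissible $|c_0|\in[0,1]$, with equality only at $|c_0|=1$; the case $m=0$, $k\geq 1$, in which $R_{k,0}(|c_0|)=(2+|c_0|)^{-1/k}\geq 3^{-1/k}=r_{k,0}$, already exhibits the strict inequality. Thus Corollary \ref{BS-cor-2.1} recovers the Liu--Liu inequality on the same (in fact, generically larger) disk, with extra non-negative terms on the left, and the sharpness of $R_{k,m}(|c_0|)$ is inherited from Theorem \ref{BS-thm-2.7}, so no fresh extremal needs to be constructed here. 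The only technical obstacle is this polynomial comparison; the other assertions are immediate from the definitions.
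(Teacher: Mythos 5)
Your first two steps are correct and are essentially all the paper intends by this remark: Corollary 2.1 of Liu--Liu is the specialization of Theorem \ref{Thm-2.3} to mappings with $\frac{D^mf_l(0)(z^m)}{m!}=a_lz_l^m$, the quantity $\mathcal{C}^f_m(||z||)$ dominates the Liu--Liu sum because the added bracket and the series of squares are non-negative, and the sharpness of $R_{k,m}(|c_0|)$ is inherited from Theorem \ref{BS-thm-2.7} since the extremal mapping $f_a$ used there is already of the form required in Corollary \ref{BS-cor-2.1}. No further argument appears in the paper, and none is needed.

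Your third step, however, is false, and it is exactly the step you single out as the technical core. The claimed inequality $R_{k,m}(|c_0|)\geq r_{k,m}$ fails for $m\geq 1$. Take $m=k$ and $|c_0|=0$: equation \eqref{ee-1.2} becomes $r^{2k}+r^{k}-1=0$, so $R_{k,k}(0)^k=(\sqrt{5}-1)/2\approx 0.618$, whereas \eqref{BS-eqqq-2.1} becomes $8r^{2k}-4=0$, so $r_{k,k}^k=1/\sqrt{2}\approx 0.707$; hence $R_{k,k}(0)<r_{k,k}$. (Similarly, for $m=1$, $k=2$, $|c_0|=0$ one gets $R\approx 0.755$ against $r_{2,1}\approx 0.790$.) Your $m=0$ computation is correct but not representative: adding non-negative terms to the left-hand side makes the inequality \emph{harder} to satisfy, so the sharp radius of a refined Bohr inequality is typically smaller, not larger, than the original one --- compare Theorem \ref{Thmm-3.1}, where the refined radius $3/5$ lies strictly below the classical radius $1/\sqrt{2}$ for functions vanishing at the origin. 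Consequently Corollary \ref{BS-cor-2.1} does not recover the Liu--Liu inequality ``on the same (in fact, generically larger) disk''; it recovers it only for $||z||\leq R_{k,m}(|c_0|)$, which for $m\geq 1$ and small $|c_0|$ is a strictly smaller disk. The paper's remark asserts no radius comparison at all: ``sharp refined version'' means only that the inequality carries the extra non-negative terms and that the $|c_0|$-dependent radius $R_{k,m}(|c_0|)$ is best possible \emph{for that refined inequality}. Deleting your radius comparison, and with it the claim of recovery on the full Liu--Liu disk, turns your proposal into a correct proof that matches the paper.
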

We now discuss the proof of Theorems \ref{Thm-2.5}, \ref{Thm-2.6} and \ref{BS-thm-2.7} applying Lemma \ref{Lemm-1.2}. 
\begin{proof}[\bf Proof of Theorm \ref{Thm-2.5}]
Let $z\in \mathbb{D}^n\setminus\{0\}$ be fixed and denote $z_0=z/{||z||}.$ Let us define a function $h_j(\lambda):=f_j(\lambda z_0),$ $\lambda \in \mathbb{D}.$ Then it is easy to see that $h_j\in \mathcal{H}(\mathbb{D},\overline{U})$ and 
\begin{align*}
h_j(\lambda)=a_j\left(\dfrac{z_j}{||z||}\right)^m{\lambda}^m+\sum_{s=N}^{\infty}\dfrac{D^sf_j(0)(z^s_0)}{s!}{\lambda}^s
\end{align*}
from the hypothesis of Theorem \ref{Thm-2.5}, where $j$ satisfies $|z_j|=||z||=\max_{1\leq l\leq n}\{|z_l|\}.$ We write $b_m=a_j\left(\frac{z_j}{||z||}\right)^m,$ $b_s=\dfrac{D^sf_j(0)(z^s_0)}{s!},$ $s=N,N+1,\dots.$ Then it is easy to see that the function $\omega(\lambda)=b_m+\sum_{s=N}^{\infty}b_s{\lambda}^{s-m}\in \mathcal{H}(\mathbb{D},\overline{\mathbb{D}})$ due to $h_j\in \mathcal{H}(\mathbb{D},\overline{\mathbb{D}}).$ Also from  the hypothesis, $|b_m|=|a_j|=||a||.$  In view of Lemma \ref{Lemm-1.2} for the function $h_j\in \mathcal{H}(\mathbb{D},\overline{\mathbb{D}}),$ we get that 
\begin{align*}
	\sum_{s=N}^{\infty}|b_s|{|\lambda|}^s&+sgn(t)\sum_{s=1}^{t}|b_s|^2\dfrac{{|\lambda|}^N}{1-|\lambda|}+\left(\dfrac{1}{1+|b_m|}+\dfrac{|\lambda|}{1-|\lambda|}\right)\sum_{s=t+1}^{\infty}|b_s|^2|\lambda|^{2s}\\&\leq \dfrac{(1-|b_m|^2)|\lambda|^N}{1-|\lambda|}
\end{align*} 
 and thus, we obtain the following estimate
 \begin{align*}
 	&\sum_{s=N}^{\infty}\dfrac{|D^sf_j(0)(z^s_0)|}{s!}{|\lambda|}^s+sgn(t)\sum_{s=1}^{t}\left(\dfrac{|D^sf_j(0)(z^s_0)|}{s!}\right)^2\dfrac{{|\lambda|}^N}{1-|\lambda|}\\&+\left(\dfrac{1}{1+|a_j|}+\dfrac{|\lambda|}{1-|\lambda|}\right)\sum_{s=t+1}^{\infty}\left(\dfrac{|D^sf_j(0)(z^s_0)|}{s!}\right)^2|\lambda|^{2s}\leq  \dfrac{(1-|a_j|^2)|\lambda|^N}{1-|\lambda|}\; \mbox{for}\; j=1,2,\dots,n.
 \end{align*}
Setting $ |\lambda|=||z||=r $ and $ z=z_0||z|| $, using maximum modulus principle, this implies that 
\begin{align*}
	&\sum_{s=N}^{\infty}\dfrac{||D^sf(0)(z^s)||}{s!}+sgn(t)\sum_{s=1}^{t}\left(\dfrac{||D^sf(0)(z^s)||}{s!}\right)^2\dfrac{{||z||}^{N-2s}}{1-||z||}\\&+\left(\dfrac{||z||^m}{||z||^m+\dfrac{||D^mf(0)(z^m)||}{m!}}+\dfrac{||z||}{1-||z||}\right)\sum_{s=t+1}^{\infty}\left(\dfrac{||D^sf(0)(z^s)||}{s!}\right)^2\leq  \dfrac{(1-||a||^2)||z||^N}{1-||z||}.
\end{align*}
Therefore, we obtain that

\begin{align*}
&\dfrac{||D^mf(0)(z^m)||}{m!}+\sum_{s=N}^{\infty}\dfrac{||D^sf(0)(z^s)||}{s!}+sgn(t)\sum_{s=1}^{t}\left(\dfrac{||D^sf(0)(z^s)||}{s!}\right)^2\dfrac{{||z||}^{N-2s}}{1-||z||}\\&+\left(\dfrac{||z||^m}{||z||^m+\dfrac{||D^mf(0)(z^m)||}{m!}}+\dfrac{||z||}{1-||z||}\right)\sum_{s=t+1}^{\infty}\left(\dfrac{||D^sf(0)(z^s)||}{s!}\right)^2\\&\leq ||a||r^m+ \dfrac{(1-||a||^2)r^N}{1-r}:=\mathcal{M}_{f}(||a||,r).
\end{align*}
To prove the desired inequality $ \mathcal{A}^f_{m}(||z||)\leq 1 $ for $ ||z||=r\leq r_{N, m} $, it is enough to show that the inequality $\mathcal{M}_{f}(||a||,r)\leq 1$ holds for $r\leq r_{N,m}.$ To achieve this, the analysis of the following cases is sufficient.

\noindent{\bf Case-I.} Let $m=0.$ Since $0\leq ||a||<1,$ a simple computation show that 
\begin{align*}
\mathcal{M}_{f}(||a||,r)&=||a||+(1-||a||^2)\dfrac{r^N}{1-r}\leq ||a||+2(1-||a||)\dfrac{r^N}{1-r}\\&=1+(1-||a||)\left(-1+\dfrac{2r^N}{1-r}\right)\\&\leq 1 \;\mbox{for}\, r\leq r_{N,m},
\end{align*}
where $r_{N,m}$ is root of equation $\phi_1(r)=0$ given in Lemma \ref{Lem-1.1}.

\noindent{\bf Case-II.} Let $N>2m,$ $m=1,2,\ldots,.$ Then we see that 
\begin{align*}
	\mathcal{M}_{f}(||a||,r)&\leq 1-\dfrac{r^N}{1-r}\left(||a||-\dfrac{1-r}{2r^{N-m}}\right)^2+\dfrac{4r^{2(N-m)}+4r^{N+1-2m}-4r^{N-2m}+r^2-2r+1}{4(1-r)r^{N-2m}}\\&\leq 1\;\;\mbox{for}\;\; r\leq r_{N,m},
\end{align*}
where $r_{N,m} $ is the root of equation $\phi_2(r)=0,$ given in Lemma \ref{Lem-1.1}.\vspace{1.2mm}

\noindent{\bf Case-III.} Let $m+1\leq N\leq 2m,$ $m=1,2,\ldots,.$ we deduce that 
\begin{align*}
	\mathcal{M}_{f}(||a||,r)&\leq1-\dfrac{r^N}{1-r}\left(||a||-\dfrac{1-r}{2r^{N-m}}\right)^2\\&\quad+\dfrac{4r^N+r^{2+2m-N}-2r^{1+2m-N}+r^{2m-N}+4r-4}{4(1-r)}\\&\leq 1\;\;\mbox{for}\;\; r\leq r_{N,m},
\end{align*}
where $r_{N,m}$ is the root of equation $\phi_3(r)=0$ given in Lemma \ref{Lem-1.1}.
Therefore, we conclude that $\mathcal{A}_m^f(||z||)\leq 1$ holds for $r\leq r_{N,m},$ where $r_{N,m} $ is given in Lemma \ref{Lem-1.1}.\vspace{1.2mm}

It is not difficult to check that 
$f_a(z)=(f_1(z_1), a,a,\dots,a)^\prime,$ $z=(z_1,z_2,\dots,z_n)^\prime\in U^n,$ where $f_1(z_1)=(a-z_1)/(1-az_1)$ for some $a\in [0,1]$ satisfies the condition of Theorem \ref{Thm-2.5}. Also we can write $f_1(z_1)=a-(1-a^2)\sum_{s=1}^{\infty}a^{s-1}z^s_1.$ Putting $z=(r,0,\dots,0)^\prime,$ $0\leq r<1,$ it shows that 
 $\frac{||D^sf_a(0)(z^s)||}{s!}=(1-a^2)a^{s-1}r^s$ and for $m=0,$ $\frac{||D^mf_a(0)(z^m)||}{m!}=a.$ In fact, for $f_a,$ a simple computation shows that 
\begin{align}\label{ee-1.1}
\mathcal{A}^{f_a}_{m}(r)&= a+\sum_{s=1}^{\infty}(1-a^2)a^{s-1}r^s+sgn(t)\sum_{s=1}^{t}(1-a^2)^2a^{2(s-1)}r^{2s}\dfrac{r^{N-2s}}{1-r}\\&\nonumber\quad+\left(\dfrac{1}{1+a}+\dfrac{r}{1-r}\right)\sum_{s=1}^{\infty}(1-a^2)^2a^{2(s-1)}r^{2s}\\&\nonumber=1+(1-a)G(a,r), 
\end{align}
where
$G(a,r):=-1+\frac{(1+a)r}{1-ar}+\frac{(1-a^2)r^2}{(1-r)(1-ar)}+\frac{(1+a)r^N(1-a^{2t})sgn(t)}{1-r}$
and 
\begin{align*}
	\lim\limits_{a\rightarrow 1^-}G(a,r)=-1+\dfrac{2r}{1-r}.
\end{align*}
Clearly, the right side of \eqref{ee-1.1} is greater than $1$ if $r>1/3.$ This implies that the constant $r_{1,0}=1/3$ is optimal.\vspace{1.2mm}

Next, to show that the constant $ r_{2,1}=3/5 $ is optimal, we consider the function 
$f_a(z)=(f_1(z_1), a,a,\dots,a)^\prime,$ $z=(z_1,z_2,\dots,z_n)^\prime\in \mathbb{D}^n,$ where $f_1(z_1)=z_1(a-z_1)/(1-az_1)$ for some $a\in [0,1]$ and it is easy to see that $ f_a $ satisfies the condition of Theorem \ref{Thm-2.5}. By a similar argument as above, it can be easily shown that $r_{2,1}=3/5$ is optimal. This completes the proof.   
\end{proof}
\begin{proof}[\bf Proof of Theorem \ref{Thm-2.6}]
Let  $z\in \mathbb{D}^n\setminus\{0\}$ be fixed and we denote $z_0=z/{||z||}.$ We define a function $h_l(\lambda):=f_l(\lambda z_0),$ $\lambda \in \mathbb{D},$ $l=1,2,\dots n.$ Clearly, $h_l\in \mathcal{H}(\mathbb{D},\overline{\mathbb{D}})$ and  we easily deduce that 
\begin{align*}
h_l(\lambda)=\dfrac{D^mf_l(0)\left(z_0^m\right)}{m!}{\lambda}^m+\sum_{s=N}^{\infty}\dfrac{D^{s}f_l(0)\left(z_0^{s}\right)}{s!}{\lambda}^s
\end{align*}
from the condition of Theorem \ref{Thm-2.6}.  Hence, we easily deduce that 
$\omega(\lambda)=b_m+\sum_{s=N}^{\infty}b_s{\lambda}^{s-m}\in \mathcal{H}(\mathbb{D},\overline{\mathbb{D}})$ due to $h_l\in \mathcal{H}(\mathbb{D},\overline{\mathbb{D}}),$   where $b_m=\frac{D^mf_l(0)\left(z_0^m\right)}{m!}$ and $b_s=\frac{D^{s}f_l(0)\left(z_0^{s}\right)}{s!}$ for $s=N,N+1,\ldots,.$ Because $\omega\in \mathcal{H}(\mathbb{D},\overline{\mathbb{D}}),$ in view of Lemma \ref{Lemm-1.2}, we obtain the following estimate
\begin{align*}
&\dfrac{|D^mf_l(0)\left(z_0^m\right)|}{m!}{|\lambda|}^{m}+\sum_{s=N}^{\infty}\dfrac{|D^{s}f_l(0)\left(z_0^{s}\right)|}{s!}{|\lambda|}^s+sgn(t)\sum_{s=1}^{t}\left(\dfrac{|D^{s}f_l(0)\left(z_0^{s}\right)|}{s!}\right)^2\dfrac{{|\lambda|}^N}{1-|\lambda|}\\&+\left(\dfrac{1}{1+\dfrac{|D^mf_l(0)\left(z_0^m\right)|}{m!}}+\dfrac{|\lambda|}{1-|\lambda|}\right)\sum_{s=t+1}^{\infty}\left(\dfrac{|D^{s}f_l(0)\left(z_0^{s}\right)|}{s!}\right)^2|\lambda|^{2s}\\&\leq \dfrac{|D^mf_l(0)\left(z_0^m\right)|}{m!}{|\lambda|}^{m}+\dfrac{\left(1-\left(\dfrac{|D^mf_l(0)\left(z_0^m\right)|}{m!}\right)^2\right)}{1-|\lambda|}|\lambda|^N\;\;\mbox{for}\;\; l=1,2,\dots,n.
\end{align*} 
Set $ |\lambda|=||z||=r $ and $ z=z_0||z|| $, we obtain that 
\begin{align*}
&\dfrac{||D^mf(0)\left(z^m\right)||}{m!}+\sum_{s=N}^{\infty}\dfrac{||D^sf(0)(z^s)||}{s!}+sgn(t)\sum_{s=1}^{t}\left(\dfrac{||D^sf(0)(z^s)||}{s!}\right)^2\dfrac{{||z||}^{N-2s}}{1-||z||}\\&+\left(\dfrac{||z||^m}{||z||^m+\dfrac{||D^mf(0)(z^m)||}{m!}}+\dfrac{||z||}{1-||z||}\right)\sum_{s=t+1}^{\infty}\left(\dfrac{||D^sf(0)(z^s)||}{s!}\right)^2\\&\leq ||a||||z||^m+ \dfrac{(1-||a||^2)||z||^N}{1-||z||}=||a||r^m+\dfrac{(1-||a||^2)r^N}{1-r},
\end{align*}
where $||a||=\frac{||D^mf(0)\left(z_0^m\right)||}{m!}.$ We arrive at the desired conclusions by employing a similar argument to that given in the proof of Theorem \ref{Thm-2.5}. Hence, we omit the details. With this, the theorem's proof is concluded.
\end{proof}
\begin{proof}[\bf Proof of Theorem \ref{BS-thm-2.7}]
Fix $ z\in\mathbb{C}^n\setminus\{0\} $ and set $ z_0=z/||z|| $. Letting $ h_{l}(\lambda)=f_l(\lambda z_0) $ for $ \lambda\in \mathbb{D} $, $ l=1, 2, \ldots, n $, it is easy to see that $ h_l\in\mathcal{H}(\mathbb{D}, \overline{\mathbb{D}}) $ and from the hypothesis, we can express the function $ h_l $ in the following form of a series
\begin{align*}
h_l(\lambda)&=b_m\lambda^m+\sum_{s=1}^{\infty}b_{sk+m}\lambda^{sk+m}=\frac{D^mf_l(0)\left(z_0^m\right)}{m!}\lambda^m+\sum_{s=1}^{\infty}\frac{D^{sk+m}f_l(0)\left(z_0^{sk+m}\right)}{(sk+m)!}\lambda^{sk+m}.
\end{align*}
We write $ \mu=\lambda^k $. Then it yields that 
\begin{align*}
\omega(\mu)=c_0+\sum_{s=1}^{\infty}c_s\mu^s\in\mathcal{H}(\mathbb{D}, \overline{\mathbb{D}})\; \mbox{due to}\; h_l\in \mathcal{H}(\mathbb{D}, \overline{\mathbb{D}}).
\end{align*}
Here $ c_s=b_{sk+m}=\frac{D^{sk+m}f_l(0)\left(z_0^{sk+m}\right)}{(sk+m)!},\; s=1, 2, 3, \ldots $ and $ c_0=\frac{D^mf_l(0)\left(z_0^m\right)}{m!} $. In view of Lemma \ref{Lemm-1.2} (with  $N=1$), a simple computation shows that
\begin{align*}
&|c_0|{|\lambda|}^m+\sum_{s=1}^{\infty}|c_s|{|\lambda|}^{ks+m}+\left(\dfrac{1}{1+|c_0|}+\dfrac{{|\lambda|}^k}{1-{|\lambda|}^k}\right)\sum_{s=1}^{\infty}|c_s|^2{|\lambda|}^{2ks+m}\\&\leq  {|\lambda|}^m\left(|c_0|+\dfrac{(1-|c_0|^2){|\lambda|}^k}{1-{|\lambda|}^k}\right)
\end{align*}
This gives the following estimate
\begin{align*}
&\dfrac{|D^mf_l(0)\left(z_0^m\right)|}{m!}{|\lambda|}^m+\sum_{s=1}^{\infty}\frac{|D^{sk+m}f_l(0)\left(z_0^{sk+m}\right)|}{(sk+m)!}{|\lambda|}^{ks+m}\\&\quad+\left(\dfrac{1}{|\lambda|^m+\dfrac{|D^mf_l(0)\left(z_0^m\right)|}{m!}|\lambda|^m}+\dfrac{{|\lambda|}^{k-m}}{1-{|\lambda|}^k}\right)\sum_{s=1}^{\infty}\left(\dfrac{|D^{sk+m}f_l(0)\left(z_0^{sk+m}\right)|}{(sk+m)!}\right)^2{|\lambda|}^{2(ks+m)}\\&\leq \left(|C_0|{|\lambda|}^m+\dfrac{(1-|c_0|^2){|\lambda|}^{k+m}}{1-{|\lambda|}^k}\right),\;\;\mbox{for}\;\; l=1,2,\dots,n.
\end{align*}
We set $ |\lambda|=||z||=r $ and $ z=z_0||z|| $. Then by the maximum modulus principle, a simple calculation confirms that
\begin{align}
&\nonumber\frac{||D^mf(0)\left(z^m\right)||}{m!}+\sum_{s=1}^{\infty}\frac{||D^{sk+m}f(0)\left(z^{sk+m}\right)||}{(sk+m)!}+\bigg(\frac{1}{||z||^m+\frac{||D^mf(0)\left(z^m\right)||}{m!}}\\&\nonumber\quad+\frac{||z||^{k-m}}{1-||z||^k}\bigg)\left(\sum_{s=1}^{\infty}\frac{||D^{sk+m}f(0)\left(z^{sk+m}\right)||}{(sk+m)!}\right)^2\\&\leq\nonumber ||z||^m\left(|c_0|+\left(1-|c_0|^2\right)\frac{||z||^k}{1-||z||^k}\right) \\&=\nonumber 1+\frac{\left(1-|c_0|-|c_0|^2\right){||z||}^{m+k}+{||z||}^k+|c_0|r^m-1}{1-{||z||}^k}\leq 1
\end{align}
for $ ||z||=r\leq R_{k,m}\left(|c_0|\right) $, where $ R_{k,m}\left(|c_0|\right) $ is the unique root in $ (0, 1) $ of equation \eqref{ee-1.2}. The next step is to show the constant $R_{k,m}\left(|c_0|\right)$ is sharp. To serve the purpose, we consider the function $ f_a $ defined by
\begin{align*}
f_a(z)=\left(z_1^m\frac{a-z^k_1}{1-az_1^k}, z_2^m\frac{a-z^k_2}{1-az_2^k}, \ldots, z_n^m\frac{a-z^k_n}{1-az_n^k}\right)
\end{align*}
for $ z=\left(z_1, z_2, \ldots, z_n\right)^{\prime}\in U^n $ and $ a\in [0, 1) $. In this case, we suppose that $ z=\left(z_1, 0, \ldots, 0\right)^{\prime} $ which implies that $ ||z||=|z_1|=r $, and according to the definition of the Fr$ \acute{e} $chet derivative, we get that
\begin{align*}
\begin{cases}
\dfrac{||D^{sk+m}f_a(0)\left(z^{sk+m}\right)||}{(sk+m)!}=\bigg|\dfrac{\partial^{sk+m}f_1(0)}{\partial z_1^{sk+m}}\cdot\dfrac{z_1^{sk+m}}{(sk+m)!}\bigg|\; \mbox{for}\; k\geq 0\vspace{2mm}\\
\dfrac{||D^{m}f_a(0)\left(z^{m}\right)||}{m!}=\bigg|\dfrac{\partial^{m}f_1(0)}{\partial z_1^{m}}\cdot\dfrac{z_1^{m}}{m!}\bigg|\; \mbox{for}\; s=0,
\end{cases}
\end{align*}
where
\begin{align*}
f_1(z)=z_1^m\left(\frac{a-z_1^k}{1-az_1^k}\right)=az_1^m-\left(1-a^2\right)\sum_{s=1}^{\infty}a^{s-1}z_1^{sk+m}.
\end{align*}
A simple computation gives that 
\begin{align*}
\frac{||D^mf_a(0)(z^m)||}{m!}=ar^m\;\;\mbox{and}\;\; \dfrac{||D^{sk+m}f_a(0)\left(z^{sk+m}\right)||}{(sk+m)!}=(1-a^2)a^{s-1}r^{sk+m}.
\end{align*}
Thus we see that 
\begin{align*}
&\nonumber\frac{||D^mf(0)\left(z^m\right)||}{m!}+\sum_{s=1}^{\infty}\frac{||D^{sk+m}f(0)\left(z^{sk+m}\right)||}{(sk+m)!}+\bigg(\frac{1}{||z||^m+\frac{||D^mf(0)\left(z^m\right)||}{m!}}\\&\nonumber\quad+\frac{||z||^{k-m}}{1-||z||^k}\bigg)\left(\sum_{s=1}^{\infty}\frac{||D^{sk+m}f(0)\left(z^{sk+m}\right)||}{(sk+m)!}\right)^2\\&=ar^m+\left(1-a^2\right)\sum_{s=1}^{\infty}a^{s-1}r^{sk+m}+\left(\frac{r^{-m}}{1+a}+\frac{r^{k-m}}{1-r^k}\right)\left(1-a^2\right)^2\sum_{s=1}^{\infty}a^{2s-2}r^{2sk+2m}\\&=r^m\left(a+\left(1-a^2\right)\frac{r^k}{1-r^k}\right)
\end{align*}
which is bigger than $ 1 $ if, and only if, $ r>R_{k,m}(a) $. This establishes the sharpness of $ R_{k,m}(a) $ and with this, the proof of theorem is completed.
\end{proof}
\section{Refined versions of the Bohr's inequality in complex Banach spaces}
There are a few of articles on Bohr inequality on complex Banach spaces. In this section, we shall consider refined version of Bohr's phenomenon in complex Banach spaces. Let $ X $ and $ Y $ be complex Banach spaces and $ \mathcal{B}_Y $ be the unit ball in $ Y $. For domain $ G\subset X $ and $ \Omega\subset Y $,  let $ \mathcal{H}(G, \Omega) $ be the set of all holomorphic functions from $ G $ into $ \Omega $. Any mapping $ f\in \mathcal{H}(G, \Omega) $ can be expanded in the following series 
\begin{align}\label{BS-eq-3.1}
	f(x)=\sum_{s=0}^{\infty}\frac{1}{s!}D^sf(0)\left(x^s\right),
\end{align}
where $ D^sf(0) $, $ s\in\mathbb{N} $, denote the $ s $-th Fr\'echet derivative of $ f $ at $ 0 $, which is bounded symmetric $ s $-linear mapping from $ \prod_{i=1}^{s}X $ to $ \mathbb{C} $. It is understood that $ D^0f(0)\left(x^0\right)=f(0) $. A domain $ G\subset X $ is said to be \textit{balanced}, if $ zG\subset G $ for all $ z\in \mathbb{D} $. Given a balanced domain $ G $, we denote the \emph{higher dimensional Bohr radius} by $ K^G_X(\Omega) $ the largest non-negative number $ r $ such that 
\begin{align*}
\sum_{s=1}^{\infty}\bigg|\frac{1}{s!}D^sf(0)\left(x^s\right)\bigg|\leq d(f(0), \partial\Omega)
\end{align*}
holds for all $ x\in r G $ and all holomorphic functions $ f\in\mathcal{H}(G, \Omega) $ with the expansion \eqref{BS-eq-3.1} about the origin. Here, we denote $ d $ as the \textit{Euclidean distance} between $ f(0) $ and the boundary $ \partial\Omega $ of the domain $ \Omega $. It is easy to see that the classical Bohr's inequality \eqref{BS-eq-1.1} states that $ K^{G}_{\mathbb{C}}(\mathbb{D})=1/3. $ In recent years, researchers have paid their considerable attention to the study of the Bohr inequality and its refined versions for Banach spaces. For example, Aizenberg \cite{Aizn-PAMS-2000} have obtained that for any balanced domain $ G\subset\mathbb{C}^n $, $ K^{G}_{\mathbb{C}^n}(\mathbb{D})\geq 1/3 $ and also showed that the constant $ K^{G}_{\mathbb{C}^n}(\mathbb{D})=1/3 $ is best possible in case of when $ G $ is a convex domain. Moreover, by taking a restriction on $ f\in\mathcal{H}(G, \mathbb{D}) $ such that $ f(0)=0 $ and for any balanced domain $ G\subset \mathbb{C}^n $, Liu and Ponnusamy \cite{Liu-Ponnusamy-PAMS-2021} have improved the quantity as $ K^{G}_{\mathbb{C}^n}(\mathbb{D})\geq 1/\sqrt{2} $ and obtained that the constant $ K^{G}_{\mathbb{C}^n}(\mathbb{D})= 1/\sqrt{2} $ is best possible if $ G $ is a convex domain. Furthermore, Hamada \emph{et al.} \cite{Hamada-IJM-2009} have established the generalization of the Bohr inequality to the holomorphic mappings $ f\in\mathcal{H}(G, \mathcal{B}_Y) $ for bounded balanced domain $ G $ in a Banach space $ X $ and $ \mathcal{B}_Y $ is the (homogeneous) unit ball in a complex Banach space $ Y $, and shown that the Bohr radius cannot be improved if $ \mathcal{B}_Y $ is the unit ball of a $ J^* $-algebra i.e., $ K^{G}_{X}(\mathbb{D})=1/3 $ (see \cite[Corollary 3.2]{Hamada-IJM-2009}). For a simply connected domain $ \Omega $ in the complex plane $ \mathbb{C} $, Bhowmik and Das \cite[Theorem 3]{Bhowmik-CRMath-2021} have obtained a lower bound of the quantity $ K^{G}_{X}(\mathbb{D}) $.  Also, a generalized Bohr radius $ R_{p, q}(X) $ , where $ p, q\in [1, \infty) $ is obtained in \cite{Das-CMB-2022} for complex Banach space $ X $. Moreover, a $ n- $variable version $ R^n_{p, q}(X)  $ of the quantity $ R_{p, q}(X)  $ are considered in \cite{Das-CMB-2022} and is determined $ R^n_{p, q}(X) $ for infinite dimensional complex Hilbert space $ \mathcal{H} $. Various other
results related to the multidimensional Bohr radius have appeared recently (see \cite{Aizn-PAMS-2000,Aizenberg-Djakov-PAMS-2000,Aizeberg-PLMS-2001,Arora-CVEE-2022,Lin-Liu-Ponnusamy-Acta-2023,Paulsen-PLMS-2002,Das-CMB-2022,Blasco-CM-2017,Allu-Halder-Pal-BSM-2023} and references therein).\vspace{1.2mm}

Motivated by the work of Ali \emph{et al.} \cite{Ali-Bar-Sol-JMAA-2017} and \cite[Corollary 1]{Kayumov-Pon-CMFT-2017}, a problem concerning symmetric analytic functions was raised (see \cite[Problem 1]{Kayumov-Ponnusamy-JMAA-2018}) which is answered completely by establishing the following result. 
\begin{thm}\cite{Kayumov-Ponnusamy-JMAA-2018}\label{BS-thm-3.1}
Given $ k, m\in\mathbb{N} $, $ f(z)=\sum_{s=0}^{\infty}a_{sk+m}z^{sk+m}\in \mathcal{H}(\mathbb{D}, \mathbb{D}) $. Then 
\begin{align*}
\sum_{s=0}^{\infty}|a_{sk+m}z^{sk+m}|\leq 1\; \mbox{for}\; r\leq r_{k,m},
\end{align*}
where $ r_{k,m} $ is the maximal positive root of the equation \eqref{BS-eqqq-2.1}. The number $ r_{k,m} $ is the best possible.
\end{thm}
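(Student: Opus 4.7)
The plan is to reduce the lacunary problem on $\mathbb{D}$ to a one-variable Schur-class estimate, and then to solve an extremal problem in $\alpha:=|a_m|$. First, factor $f(z)=z^{m}h(z^{k})$ where $h(w)=\sum_{s\geq 0}a_{sk+m}w^{s}$. The inequality $|f(z)|\leq 1$ gives $|h(z^{k})|\leq 1/|z|^{m}$ on each circle $|z|=r<1$; letting $r\to 1^{-}$ and invoking the maximum modulus principle yields $h\in\mathcal{H}(\mathbb{D},\overline{\mathbb{D}})$. Writing $\rho=r^{k}$, the quantity to estimate is $r^{m}\bigl(\alpha+\sum_{s\geq 1}|h_s|\rho^{s}\bigr)$.

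The crux is a Schur-class bound sharp for the M\"obius extremal $\phi_{\alpha}(w):=(\alpha-w)/(1-\alpha w)$, namely
\[
\sum_{s\geq 1}|h_s|\rho^{s}\;\leq\;\frac{(1-\alpha^{2})\,\rho}{1-\alpha\rho},
\]
whose right-hand side is exactly the value realised by $\phi_\alpha$. Combined with the previous step this gives the master inequality
\[
\sum_{s\geq 0}|a_{sk+m}|\,r^{sk+m}\;\leq\;F(\alpha,r)\;:=\;\alpha r^{m}+\frac{(1-\alpha^{2})\,r^{m+k}}{1-\alpha r^{k}}.
\]

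Now maximise $F$ over $\alpha\in[0,1]$: the first-order condition $\partial F/\partial\alpha=0$ is solved by $\alpha^{*}=(r^{m}+r^{k})/(4r^{m+k})$, and substituting this into $F(\alpha^{*},r)=1$ and clearing denominators yields $(r^{m}+r^{k})^{2}+8r^{2(m+k)}-8r^{m+k}=0$. After dividing by $r^{2m}$ this is precisely \eqref{BS-eqqq-2.1}, so the bound $F\leq 1$ is valid exactly up to $r=r_{k,m}$. Sharpness is immediate: the family $f_{a}(z)=z^{m}\phi_{a}(z^{k})=az^{m}-(1-a^{2})\sum_{s\geq 1}a^{s-1}z^{sk+m}$ has Bohr sum equal to $F(a,r)$, and taking $a=\alpha^{*}$ at $r=r_{k,m}$ forces equality; hence $r_{k,m}$ cannot be enlarged.

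The hard part is the Schur-class majorant: the classical pointwise estimate $|h_s|\leq 1-\alpha^{2}$ only gives the weaker $\sum_{s\geq 1}|h_s|\rho^{s}\leq(1-\alpha^{2})\rho/(1-\rho)$, which has the wrong denominator and produces a radius strictly smaller than $r_{k,m}$. The sharper inequality with denominator $1-\alpha\rho$ is moreover not literally true for every $h$: when $\alpha=0$ and $h(w)=w\phi_{a}(w)$, a direct expansion gives $\sum_{s\geq 1}|h_s|\rho^{s}=a\rho+(1-a^{2})\rho^{2}/(1-a\rho)$, exceeding $\rho$ for $(a,\rho)$ close to $(1,1)$. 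The decisive step is therefore to replace the majorant inequality with the correct sharp one — for instance by a Schur-parameter argument, or by a phase-averaging in $z\mapsto e^{2\pi i/k}z$ that exploits the lacunary structure of $f$ — so that the optimization still produces \eqref{BS-eqqq-2.1}.
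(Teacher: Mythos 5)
First, a point of reference: the paper itself offers no proof of this statement --- Theorem \ref{BS-thm-3.1} is imported verbatim from \cite{Kayumov-Ponnusamy-JMAA-2018} --- so your attempt must be measured against the proof in that source. Your reduction $f(z)=z^{m}h(z^{k})$ with $h\in\mathcal{H}(\mathbb{D},\overline{\mathbb{D}})$ (via the maximum principle) and your choice of extremal family $z^{m}\phi_{a}(z^{k})$ are both correct and match the cited argument, and your final equation is right: dividing $(r^{m}+r^{k})^{2}+8r^{2(m+k)}-8r^{m+k}=0$ by $r^{2m}$ does reproduce \eqref{BS-eqqq-2.1}. (Your stated critical point is wrong, however: since $F(\alpha,r)=r^{m}\bigl[\alpha+(1-\alpha^{2})u/(1-\alpha u)\bigr]$ with $u=r^{k}$, the maximizer cannot depend on $m$; it is $\alpha^{*}=\bigl(2-\sqrt{2(1-u^{2})}\bigr)/(2u)$, not $(r^{m}+r^{k})/(4r^{m+k})$. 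Substituting the correct $\alpha^{*}$ does give \eqref{BS-eqqq-2.1}, so this is an algebra slip rather than a structural error.)

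The genuine gap is the one you flag yourself and then leave open: the entire upper bound rests on the majorant $\sum_{s\geq1}|h_{s}|\rho^{s}\leq(1-\alpha^{2})\rho/(1-\alpha\rho)$, which is false in general --- your own counterexample $h(w)=w\phi_{a}(w)$ refutes it --- and you supply no valid replacement. Invoking ``a Schur-parameter argument'' is not a proof, and the proposed phase-averaging over $z\mapsto e^{2\pi i/k}z$ cannot help: $f$ is already $k$-lacunary, so the average merely reproduces $f$ and yields no new information. What actually closes the argument in \cite{Kayumov-Ponnusamy-JMAA-2018} is a two-case coefficient lemma: for $h\in\mathcal{H}(\mathbb{D},\overline{\mathbb{D}})$ with $\alpha=|h_{0}|$ one has $\sum_{s\geq1}|h_{s}|\rho^{s}\leq(1-\alpha^{2})\rho/(1-\alpha\rho)$ \emph{only when} $\alpha\geq\rho$ (this restricted inequality is itself a nontrivial lemma requiring proof), while for $\alpha<\rho$ one falls back on Parseval plus Cauchy--Schwarz to get $\sum_{s\geq1}|h_{s}|\rho^{s}\leq\sqrt{1-\alpha^{2}}\,\rho/\sqrt{1-\rho^{2}}$. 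One then checks that on the relevant range $u=r^{k}\leq1/\sqrt{2}$ the interior maximizer $\alpha^{*}$ satisfies $\alpha^{*}\geq u$, so the binding constraint comes from the first regime and produces \eqref{BS-eqqq-2.1}, while the second regime contributes at most $2r^{m+k}$ and never exceeds $1$ up to $r_{k,m}$. Without this case split and a proof of the restricted sharp majorant, your argument only establishes the weaker radius coming from the Wiener bound $|h_{s}|\leq1-\alpha^{2}$ (for $k=2$, $m=1$ that is approximately $0.731$ versus the true $0.789$), so the central analytic content of the theorem is missing.
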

A multidimensional generalization of Theorem \ref{BS-thm-3.1} is established recently in \cite{Arora-CVEE-2022} for functions with lacunary series in the class $ \mathcal{H}(G, \mathbb{D}) $. The result is
\begin{thm}\cite{Arora-CVEE-2022}\label{BS-thm-3.2}
Let $ k, m\in\mathbb{N} $, $ 0\leq m\leq k $. Suppose that $ G $ be a bounded balanced domain in a complex Banach space $ X $ and $ f\in\mathcal{H}(G, \mathbb{D}) $ be of the form 
$f(x)=\frac{D^mf(0)\left(x^m\right)}{m!}+\sum_{s=1}^{\infty}\frac{D^{sk+m}f(0)\left(x^{sk+m}\right)}{(sk+m)!}$.
Then we have
\begin{align*}
\frac{|D^mf(0)\left(x^m\right)|}{m!}+\sum_{s=1}^{\infty}\frac{|D^{sk+m}f(0)\left(x^{sk+m}\right)|}{(sk+m)!}\leq 1\;\mbox{for}\; x\in \left(r_{k,m}\right)G.
\end{align*} 
Here the constant $ r_{k,m} $ is the maximal positive root in $ (0, 1) $ of the equation \eqref{BS-eqqq-2.1}. The radius $ r_{k,m} $ is best possible.
\end{thm}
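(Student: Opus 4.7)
The plan is to reduce the Banach-space estimate to the one-variable result Theorem \ref{BS-thm-3.1} via the standard slicing technique, which is the same scheme used in the proofs of Theorems \ref{Thm-2.5}--\ref{BS-thm-2.7}. First, I would fix $x\in G\setminus\{0\}$ and let $t:=\rho_G(x)\in[0,1)$ be the Minkowski functional of $G$ at $x$, so that $x_0:=x/t$ sits on the Minkowski boundary. Because $G$ is balanced and bounded, the map $\lambda\mapsto \lambda x_0$ sends $\mathbb{D}$ into $G$, and the slice $h(\lambda):=f(\lambda x_0)$ belongs to $\mathcal{H}(\mathbb{D},\mathbb{D})$. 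By the homogeneity of each Fr\'echet derivative $D^sf(0)(\cdot)/s!$ as an $s$-homogeneous polynomial, together with the lacunary structure of $f$, the Taylor expansion of $h$ reads
$$h(\lambda)=\frac{D^m f(0)(x_0^m)}{m!}\lambda^m+\sum_{s=1}^{\infty}\frac{D^{sk+m}f(0)(x_0^{sk+m})}{(sk+m)!}\lambda^{sk+m},$$
so $h$ is precisely of the lacunary type governed by Theorem \ref{BS-thm-3.1}.

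Applying Theorem \ref{BS-thm-3.1} to $h$ yields
$$\sum_{s=0}^{\infty}\left|\frac{D^{sk+m}f(0)(x_0^{sk+m})}{(sk+m)!}\right||\lambda|^{sk+m}\leq 1\qquad\text{for }|\lambda|\leq r_{k,m}.$$
Specializing $\lambda=t$ and using $D^{sk+m}f(0)(x^{sk+m})=t^{sk+m}\,D^{sk+m}f(0)(x_0^{sk+m})$ (homogeneity of the polarization), the left-hand side becomes exactly the quantity in the statement; moreover $t\leq r_{k,m}$ is equivalent to $x\in (r_{k,m})G$. This delivers the desired inequality.

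For sharpness, I would pull back the extremal one-variable function. Since $G$ is a bounded balanced domain, the Minkowski functional $\rho_G$ is continuous, positively homogeneous, and finite, so by Hahn--Banach there exists $L\in X^{*}$ with $|L(x)|\leq \rho_G(x)$ on $G$ and equality attained along a ray $\{\lambda x_0^*:|\lambda|<1\}\subset G$. Setting
$$f_a(x):=L(x)^{m}\,\frac{a-L(x)^{k}}{1-a\,L(x)^{k}},\qquad a\in[0,1),$$
gives $f_a\in\mathcal{H}(G,\mathbb{D})$ with lacunary expansion of the required form; evaluating at $x=r\,x_0^*$ reproduces the one-variable extremal computation, showing that the sum exceeds $1$ as soon as $r>r_{k,m}$, which is also established (in the one-variable case) exactly at the maximal positive root of \eqref{BS-eqqq-2.1}.

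The main obstacle in this scheme is not a deep one: it is the geometric bookkeeping of the slice reduction on a general bounded balanced domain. One must verify that $\rho_G$ is well-behaved enough that $x_0=x/\rho_G(x)$ makes sense and the slice $\lambda\mapsto \lambda x_0$ genuinely lies in $G$ for $|\lambda|<1$, and that the resulting one-variable Taylor coefficients coincide with the values of the symmetric multilinear mappings $D^{sk+m}f(0)$ on the diagonal $x_0^{sk+m}$. Both facts are direct consequences of the balanced hypothesis and the definition of the Fr\'echet derivative, so the reduction is clean once unwound; all the analytic content has already been placed in Theorem \ref{BS-thm-3.1}.
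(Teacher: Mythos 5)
The paper does not prove this theorem; it is quoted from \cite{Arora-CVEE-2022} as background. Your reduction is nevertheless exactly the scheme the paper itself uses for its own refined analogues (Theorems \ref{BS-thm-6.1} and \ref{BS-thm-6.4}): slice along $\lambda\mapsto\lambda x_0$, use the balancedness of $G$ and the homogeneity of the Fr\'echet derivatives to identify the slice with a one-variable lacunary self-map of $\mathbb{D}$, and invoke the one-variable result (here Theorem \ref{BS-thm-3.1}); that part of your argument is correct.

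The one step I would push back on is the sharpness construction. You invoke Hahn--Banach to produce $L\in X^*$ with $|L(x)|\le\rho_G(x)$ on $G$ and equality along a ray, but for a general bounded balanced domain $G$ the Minkowski functional $\rho_G$ need not be subadditive (only for convex $G$), so Hahn--Banach does not dominate a linear functional by $\rho_G$. The standard repair, which is what the paper does in its own sharpness arguments, is to fix $r_0>r_{k,m}$, choose $\gamma\in\partial G$ and $c\in(0,1)$ with $cr_0>r_{k,m}$ and $c\sup\{\|x\|:x\in\partial G\}<\|\gamma\|$, take $\Psi_\gamma\in X^*$ of norm one with $\Psi_\gamma(\gamma)=\|\gamma\|$, and use $h(x)=W\bigl(c\Psi_\gamma(x)/\|\gamma\|\bigr)$ with $W(z)=z^m(a-z^k)/(1-az^k)$; evaluating at $x=r_0\gamma$ then reproduces the one-variable extremal computation at the parameter $cr_0>r_{k,m}$. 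With that substitution your sharpness argument goes through; as written, the existence of your $L$ is unjustified for non-convex balanced $G$.
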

For further improvement of the inequality in Theorem \ref{BS-thm-3.2}, it is natural to raise the following question.
\begin{ques}\label{BS-qn-3.1}
Can we establish an analogue of Theorem \ref{Thmm-2.1} which is a sharp refined version of Theorem \ref{BS-thm-3.2}?
\end{ques}
Ponnusamy \emph{et al.}\cite{Ponnusamy-Vijayak-Wirths-RM-2020} established a refined version of the Bohr's inequality in the case $f\in \mathcal{H}(\mathbb{D}, \mathbb{D}) $ with $f(0)=0$. We recall their result.
\begin{thm}\label{Thmm-3.1}
Suppose that $f(z)=\sum_{n=1}^{\infty}a_nz^n\in \mathcal{H}(\mathbb{D}, \mathbb{D}).$ Then
\begin{align*}
\sum_{n=1}^{\infty}|a_n|r^n+\left(\dfrac{1}{1+|a_1|}+\dfrac{r}{1-r}\right)\sum_{n=2}^{\infty}|a_n|^2r^{2n-1}\leq 1\;\;\mbox{for}\;\; r\leq\dfrac{3}{5}.
\end{align*}
The number $3/5$ is sharp.
\end{thm}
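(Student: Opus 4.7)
The plan is to exploit the hypothesis $f(0)=0$ by passing to the auxiliary function $g(z) := f(z)/z$. By the Schwarz lemma, $g$ lies in $\mathcal{H}(\mathbb{D}, \overline{\mathbb{D}})$, has Taylor coefficients $b_n = a_{n+1}$, and in particular $g(0) = a_1$. Applying Lemma \ref{Lemm-1.2} to $g$ with the smallest admissible parameter $N = 1$, the floor $t = \lfloor 0/2 \rfloor = 0$ kills the middle signed summand and leaves
\[
\sum_{n=1}^{\infty}|a_{n+1}|\,r^{n} + \left(\frac{1}{1+|a_1|}+\frac{r}{1-r}\right)\sum_{n=1}^{\infty}|a_{n+1}|^{2}r^{2n} \le (1-|a_1|^{2})\frac{r}{1-r}.
\]
Multiplying through by $r$ and shifting the index $n \mapsto n-1$ converts the two sums into $\sum_{n=2}^{\infty}|a_n|r^{n}$ and $\sum_{n=2}^{\infty}|a_n|^{2}r^{2n-1}$, respectively; adding $|a_1|r$ to both sides then reproduces exactly the left-hand side of Theorem \ref{Thmm-3.1} and bounds it above by $|a_1|r + (1-|a_1|^{2})r^{2}/(1-r)$.

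It therefore suffices to prove the scalar inequality $|a_1|r + (1-|a_1|^{2})r^{2}/(1-r) \le 1$ for $r \in [0, 3/5]$ and $|a_1|\in[0,1]$. Clearing the positive factor $1-r$, this is equivalent to $p(r,a) \ge 0$ where $p(r,a) := 1 - (1+a)r - (1-a-a^{2})r^{2}$ and $a := |a_1|$. A direct expansion yields the transparent identity
\[
p\!\left(\tfrac{3}{5},a\right) = \frac{(1-3a)^{2}}{25} \ge 0,
\]
with equality precisely when $a = 1/3$. To upgrade this pointwise bound to the whole interval $r \in [0, 3/5]$, I would observe that $\partial_r p(r,a) = -(1+a) - 2r(1-a-a^{2})$ is strictly negative on $[0, 3/5]$: for $a\le (\sqrt{5}-1)/2$ the coefficient $1-a-a^{2}$ is non-negative so this is immediate, whereas for $a > (\sqrt{5}-1)/2$ the unique zero of $\partial_r p(\cdot,a)$ is $(1+a)/(2(a+a^{2}-1)) \ge 1 > 3/5$. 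In both cases $p(\cdot,a)$ decreases on $[0,3/5]$, hence $p(r,a) \ge p(3/5,a) \ge 0$.

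For sharpness, the equality case $a = 1/3$ in the identity above pinpoints the candidate extremal function $f_*(z) = z(1/3 - z)/(1 - z/3)$, for which $a_1 = 1/3$ and $|a_n| = (8/9)(1/3)^{n-2}$ for $n\ge 2$. Summing the two resulting geometric series at $r = 3/5$ gives the explicit evaluations $\sum_{n\ge1}|a_n|r^n = 1/5 + 2/5 = 3/5$ and $\bigl(1/(1+a_1) + r/(1-r)\bigr)\sum_{n\ge2}|a_n|^{2}r^{2n-1} = (9/4)\cdot(8/45) = 2/5$, whose sum is exactly $1$. Since for any fixed non-zero $f$ every ingredient of the left-hand side of Theorem \ref{Thmm-3.1} is a manifestly increasing function of $r$, the value at $f_*$ strictly exceeds $1$ for $r > 3/5$, ruling out any larger universal radius. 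I expect no conceptual obstacle; the care needed is purely technical, namely the index-shift bookkeeping that converts the $N=1$ bound of Lemma \ref{Lemm-1.2} into the normalization of Theorem \ref{Thmm-3.1}, together with the short case-split for monotonicity of $p(\cdot,a)$ when $1-a-a^{2}<0$.
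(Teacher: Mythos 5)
Your proposal is correct and follows essentially the same route the paper itself uses: the paper recalls Theorem \ref{Thmm-3.1} without proof (citing Ponnusamy--Vijayakumar--Wirths), but its own proof of the generalization, Theorem \ref{BS-thm-6.4}, is exactly your argument — factor out $z$, apply Lemma \ref{Lemm-1.2} with $N=1$ to $f(z)/z$, multiply by $r$, and reduce to the scalar bound $|a_1|r+(1-|a_1|^2)r^2/(1-r)\le 1$, with sharpness from $z(a-z)/(1-az)$. The only (immaterial) difference is in verifying that scalar bound: the paper maximizes over $a=|a_1|$ at fixed $r$, obtaining $J(t_0)=(5r-3)(r+1)/4\le 0$, whereas you evaluate at $r=3/5$ to get $(1-3a)^2/25\ge 0$ and then invoke monotonicity in $r$; both are fine.
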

We now state our result answering Question \ref{BS-qn-3.1} for functions with lacunary series in the class $ \mathcal{H}(G, \Omega) $. In order to obtain the sharp estimate, we use a recent approach of Liu \emph{et al.} \cite{Liu-Liu-Ponnusamy-2021} which they used to investigate the Bohr radius for symmetric function $ f\in\mathcal{H}(\mathbb{D}, \mathbb{D}) $. In the last section, we show that an application of Theorem \ref{BS-thm-6.1} will help us to establish a multidimensional refined version of the Bohr-type inequality for the harmonic function $f$ from the bounded balanced domain $G$ into $\mathbb{D}$.
\begin{thm}\label{BS-thm-6.1}
Let $ k, m\in\mathbb{N} $, $ 0\leq m\leq k $. Suppose that $ G $ be a bounded balanced domain in a complex Banach space $ X $ and $ f\in\mathcal{H}(G, \mathbb{D}) $ be of the form 
$f(x)=\frac{D^mf(0)\left(x^m\right)}{m!}+\sum_{s=1}^{\infty}\frac{D^{sk+m}f(0)\left(x^{sk+m}\right)}{(sk+m)!}$.
Then 
\begin{align*}
\mathcal{I}^f_{m,k}(r):=&\frac{|D^mf(0)\left(x^m\right)|}{m!}+\sum_{s=1}^{\infty}\frac{|D^{sk+m}f(0)\left(x^{sk+m}\right)|}{(sk+m)!}\\&\quad+\left(\frac{1}{r^m+\frac{|D^mf(0)\left(x^m\right)|}{m!}}+\frac{r^{k-m}}{1-r^k}\right)\sum_{s=1}^{\infty}\left(\frac{|D^{sk+m}f(0)\left(x^{sk+m}\right)|}{(sk+m)!}\right)^2\leq 1
\end{align*}
for $ x\in \left(R_{k,m}(|c_0|)\right)G $, where $ R_{k,m}(|c_0|) $ is the maximal positive root in $ (0, 1) $ of the equation given by \eqref{ee-1.2}.
\end{thm}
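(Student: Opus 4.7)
The plan is to mirror the proof of Theorem \ref{BS-thm-2.7} in the abstract Banach space setting, using the Minkowski functional $\rho_G$ of the bounded balanced domain $G$ in place of the polydisk max-norm. Given $x\in G\setminus\{0\}$, set $r=\rho_G(x)$ and $x_0=x/r$, so that $\rho_G(x_0)=1$ and $\lambda x_0\in G$ for every $\lambda\in\mathbb{D}$. The slice $h(\lambda):=f(\lambda x_0)$ is then a holomorphic map $\mathbb{D}\to\mathbb{D}$, and the $s$-homogeneity of each $s$-linear form $D^sf(0)$ combined with the lacunary hypothesis yields
\begin{equation*}
h(\lambda)=c_0\lambda^m+\sum_{s=1}^{\infty}c_s\lambda^{sk+m},\qquad c_0=\frac{D^mf(0)(x_0^m)}{m!},\ c_s=\frac{D^{sk+m}f(0)(x_0^{sk+m})}{(sk+m)!}.
\end{equation*}

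The next step is to produce a genuine Schur function on $\mathbb{D}$ via the change of variable $\mu=\lambda^k$. Because $h(\lambda)/\lambda^m$ has a removable singularity at the origin, is bounded by $1$ on $\mathbb{D}$ by the generalized Schwarz lemma, and depends only on $\lambda^k$, the function $\omega(\mu):=c_0+\sum_{s\geq 1}c_s\mu^s$ belongs to $\mathcal{H}(\mathbb{D},\overline{\mathbb{D}})$. I would then invoke Lemma \ref{Lemm-1.2} with $N=1$, which forces $t=\lfloor 0/2\rfloor=0$ so that the middle sum disappears, evaluate at $|\mu|=r^k$, multiply the result by $r^m$, and rewrite everything using the homogeneity identities $|D^{sk+m}f(0)(x^{sk+m})|/(sk+m)!=r^{sk+m}|c_s|$ (and similarly for $c_0$). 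A short algebraic check then shows that the left-hand side equals $\mathcal{I}^f_{m,k}(r)$, while the right-hand side simplifies to $|c_0|r^m+(1-|c_0|^2)r^{m+k}/(1-r^k)$.

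The proof concludes by the elementary observation that this last expression is at most $1$ if and only if $(1-|c_0|-|c_0|^2)r^{m+k}+r^k+|c_0|r^m-1\leq 0$, which is precisely \eqref{ee-1.2} with $\leq$ in place of $=$. Since the polynomial on the left equals $-1$ at $r=0$ (or $|c_0|-1<0$ when $m=0$) and $1-|c_0|^2>0$ at $r=1$, it changes sign and the desired inequality holds for all $r$ up to the maximal positive root $R_{k,m}(|c_0|)$. The main subtlety is ensuring that the substitution $\mu=\lambda^k$ really gives a single-valued Schur function on all of $\mathbb{D}$: without the lacunary structure of $h$, the generalized Schwarz lemma bound on $h(\lambda)/\lambda^m$ could only be transported to a $k$-fold cover. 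Everything else transcribes the argument for Theorem \ref{BS-thm-2.7} almost verbatim.
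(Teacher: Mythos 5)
Your proposal is correct and follows essentially the same route as the paper: slice $f$ along a ray to get $F(\lambda)=\lambda^m\omega(\lambda^k)$ with $\omega\in\mathcal{H}(\mathbb{D},\overline{\mathbb{D}})$, apply Lemma \ref{Lemm-1.2} with $N=1$, multiply by $r^m$, and reduce the conclusion to the sign of the polynomial in \eqref{ee-1.2}. The only cosmetic difference is that you normalize via the Minkowski functional of $G$ while the paper simply takes an arbitrary $y\in G$ and sets $F(z)=f(zy)$; both arguments are otherwise identical.
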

As a consequence of Theorem \ref{BS-thm-6.1} (for $ m=0 $ and $ k=1 $), we obtain the following corollary which is in fact a refined version of \cite[Corollary 3.2]{Hamada-IJM-2009}.
\begin{cor}
Let $ f\in\mathcal{H}(G, \mathbb{D}) $ be of the form $ f(x)=\sum_{s=0}^{\infty}\frac{D^sf(0)(x^s)}{s!} $, where $ D^sf(0) $, $ s\in\mathbb{N} $ denote the $ s $-th Fr\'echet derivative of $ f $ at $ 0 $. Then 
\begin{align*}
|f(0)|+\sum_{s=1}^{\infty}\frac{|D^sf(0)\left(x^s\right)|}{s!}+\left(\frac{1}{1+|f(0)|}+\frac{r}{1-r}\right)\sum_{s=1}^{\infty}\left(\frac{|D^sf(0)\left(x^s\right)|}{s!}\right)^2\leq 1
\end{align*}
for $ x\in (1/3)G $. Here, the constant $ 1/3 $ is best possible.
\end{cor}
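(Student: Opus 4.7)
The plan is to read the corollary as the specialization of Theorem \ref{BS-thm-6.1} to $m=0$, $k=1$, and then convert the $|c_0|$-dependent radius $R_{1,0}(|c_0|)$ into the universal constant $1/3$. Under these parameter choices, the functional $\mathcal{I}^f_{m,k}$ collapses term-by-term into the left-hand side of the corollary: the zeroth-order term $D^0f(0)/0!$ is just $f(0)$, the sum $\sum_{s=1}^{\infty} D^{sk+m}f(0)(x^{sk+m})/(sk+m)!$ becomes the full tail $\sum_{s=1}^{\infty} D^{s}f(0)(x^{s})/s!$, the factor $1/(r^{m}+|D^{m}f(0)(x^{m})|/m!)$ reduces to $1/(1+|f(0)|)$, and $r^{k-m}/(1-r^{k})$ becomes $r/(1-r)$. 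So $\mathcal{I}^{f}_{0,1}(r)$ is literally the left-hand side of the inequality to be proved.

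Next I would solve equation \eqref{ee-1.2} explicitly for these parameters. Writing $t:=|c_0|=|f(0)|\in[0,1]$, the equation becomes $(1-t-t^{2})r+r+t-1=0$, equivalently $(2-t-t^{2})\,r=1-t$. The factorization $2-t-t^{2}=(1-t)(2+t)$ gives $R_{1,0}(t)=1/(2+t)$, which attains its minimum $1/3$ at $t=1$. Hence $R_{1,0}(|f(0)|)\geq 1/3$ for every admissible $f$; because $G$ is balanced, this forces $(1/3)G\subseteq R_{1,0}(|f(0)|)\,G$, so every $x\in (1/3)G$ satisfies the hypothesis of Theorem \ref{BS-thm-6.1}, and the inequality is obtained.

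For sharpness, I would lift the extremal family used in the proof of Theorem \ref{Thm-2.5} to the Banach-space setting by composition with a linear functional. Using the Hahn–Banach theorem, choose a continuous linear $\ell:X\to\mathbb{C}$ normalized so that $\sup_{y\in G}|\ell(y)|=1$. For $a\in[0,1)$, set $f_a(x)=(a-\ell(x))/(1-a\ell(x))$; then $f_a\in\mathcal{H}(G,\mathbb{D})$ with expansion $f_a(x)=a-(1-a^{2})\sum_{s=1}^{\infty}a^{s-1}\ell(x)^{s}$, so $f_a(0)=a$ and $D^{s}f_a(0)(x^{s})/s!=-(1-a^{2})a^{s-1}\ell(x)^{s}$. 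Picking $y_0\in G$ with $|\ell(y_0)|$ arbitrarily close to $1$ and letting $x=ry_0$, the left-hand side of the corollary's inequality evaluated at $f_a$ rearranges into the form $\mathrm{LHS}=1+(1-a)\Phi(a,r)$, a routine computation (formally identical to the one carried out in the proof of Theorem \ref{Thm-2.5}) giving $\lim_{a\to 1^{-}}\Phi(a,r)=-1+2r/(1-r)$. This limit is strictly positive precisely when $r>1/3$, so for any such $r$ the inequality is violated when $a$ is sufficiently close to $1$, proving that $1/3$ cannot be replaced by a larger constant.

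The main obstacle is essentially clerical rather than conceptual: once Theorem \ref{BS-thm-6.1} is available, everything reduces to the single algebraic identity $2-t-t^{2}=(1-t)(2+t)$ together with a standard Hahn–Banach transfer of the one-variable extremal to $\mathcal{H}(G,\mathbb{D})$. The only place one must be careful is in choosing $\ell$ so that $\sup_{y\in G}|\ell(y)|=1$, which uses boundedness of $G$ implicit in Theorem \ref{BS-thm-6.1} and guarantees that $f_a$ actually lands in $\mathbb{D}$.
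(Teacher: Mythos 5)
Your proposal is correct and follows essentially the same route as the paper: the corollary is exactly Theorem \ref{BS-thm-6.1} specialized to $m=0$, $k=1$, where equation \eqref{ee-1.2} factors as $(1-t)(2+t)r=1-t$ so that $R_{1,0}(|f(0)|)=1/(2+|f(0)|)\geq 1/3$, and since $G$ is balanced this covers all $x\in(1/3)G$. Your sharpness construction (a M\"obius map $(a-z)/(1-az)$ composed with a norm-one linear functional, then letting $a\to 1^-$) is the same extremal family used in the paper's proof of Theorem \ref{BS-thm-6.1}, merely with the normalization $\sup_{y\in G}|\ell(y)|=1$ in place of the functional $\Psi_\gamma$ evaluated at a boundary point $\gamma$.
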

Now we concentrate to obtain an analogue of Theorem \ref{Thmm-3.1} for a bounded balanced domain $ G $ in a complex Banach space $ X $ and $ f\in\mathcal{H}(G, \mathbb{D}) $ with lacunary series and we obtain the following sharp refined version of Bohr's inequality. 
\begin{thm}\label{BS-thm-6.4}
	Let $ G $ be a bounded balanced domain in a complex Banach space $ X $ and $ f\in\mathcal{H}(G, \mathbb{D}) $ be of the form $ f(x)=\sum_{s=1}^{\infty}\frac{D^sf(0)(x^s)}{s!} $. Then we have 
\begin{align}\label{ee-3.2}
\sum_{s=1}^{\infty}\frac{|D^sf(0)(x^s)|}{s!}+\left(\frac{1}{r+\frac{|Df(0)(x)|}{1!}}+\frac{1}{1-r}\right)\sum_{s=1}^{\infty}\left(\frac{|D^sf(0)(x^s)|}{s!}\right)^2\leq 1
\end{align}
for $ x\in(3/5)G $. The constant $ 3/5 $ is best possible.
\end{thm}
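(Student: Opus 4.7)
The plan is to reduce Theorem \ref{BS-thm-6.4} to the sharp one-variable refinement of Theorem \ref{Thmm-3.1} by the slice reduction already used in the proofs of Theorems \ref{Thm-2.5}, \ref{Thm-2.6} and \ref{BS-thm-2.7}, now adapted to a balanced domain in a Banach space. Fix $x \in G \setminus \{0\}$ and let $r := \mu_G(x)$ be the Minkowski functional of $G$. Since $G$ is balanced, $\mu_G$ is balanced-homogeneous and $y \in G \Leftrightarrow \mu_G(y) < 1$, so the hypothesis $x \in (3/5)G$ is exactly $r \leq 3/5$. Setting $x_0 := x/r$, so that $\mu_G(x_0) = 1$, the slice function $h(\lambda) := f(\lambda x_0)$ belongs to $\mathcal{H}(\mathbb{D}, \mathbb{D})$ (because $\mu_G(\lambda x_0) = |\lambda| < 1$ forces $\lambda x_0 \in G$), vanishes at $0$, and has Taylor expansion $h(\lambda) = \sum_{s=1}^\infty a_s \lambda^s$ with $a_s = D^s f(0)(x_0^s)/s!$.

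Next I would apply Theorem \ref{Thmm-3.1} to $h$ at $|\lambda| = r \leq 3/5$ to get
\[
\sum_{s=1}^\infty |a_s|\, r^s + \left(\frac{1}{1+|a_1|} + \frac{r}{1-r}\right) \sum_{s=2}^\infty |a_s|^2 r^{2s-1} \leq 1.
\]
Homogeneity of the Fr\'echet derivatives gives $|a_s|\, r^s = |D^s f(0)(x^s)|/s!$ and $|a_1| = |Df(0)(x)|/r$, hence $1/(1+|a_1|) = r/(r + |Df(0)(x)|)$. Writing $|a_s|^2 r^{2s-1} = (|a_s|\, r^s)^2 / r$, the outer factor $r$ pulls inside and cancels the $1/r$, producing the coefficient $1/(r+|Df(0)(x)|) + 1/(1-r)$ multiplying $\sum (|D^s f(0)(x^s)|/s!)^2$. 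Collecting everything, the inequality above transforms term-by-term into exactly the claim of Theorem \ref{BS-thm-6.4}.

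For sharpness of the radius $3/5$, I would transplant the one-variable extremal behind Theorem \ref{Thmm-3.1}. Choose any $x_0 \in X$ with $\mu_G(x_0) = 1$ and, on the complex disk slice $\{\lambda x_0 : |\lambda| < 1\} \subset G$, consider the test family $F_a(\lambda x_0) := \lambda(a-\lambda)/(1-a\lambda)$ for $a \in [0,1)$, extended to $G$ either by composing with a bounded linear functional peaking at $x_0$ (available by Hahn--Banach when $G$ is convex, via the seminorm $\mu_G$), or, for general balanced $G$, by any holomorphic extension whose restriction to the slice is $F_a$. Since every quantity $|D^s F_a(0)(x^s)|/s!$ appearing in the inequality is determined by the slice, applying Theorem \ref{BS-thm-6.4} to $F_a$ at $x = rx_0$ collapses back to the sharp one-variable inequality, and the extremal behaviour as $a \to 1^-$ rules out any radius larger than $3/5$. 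The main obstacle I anticipate is the algebraic bookkeeping in the second step (correctly matching the $r^{2s-1}$ factor of Theorem \ref{Thmm-3.1} against the $r$-free quadratic sum and tracking the $s=1$ term), together with the construction of a genuine $\mathcal{H}(G, \mathbb{D})$ extension of the one-variable extremal outside the convex case.
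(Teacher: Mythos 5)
Your proposal follows essentially the same route as the paper's own proof: slice the function along a complex line through the origin, apply the sharp one-variable refinement, and transplant the one-variable extremal via a norming functional for sharpness. (The only cosmetic difference is that you invoke Theorem \ref{Thmm-3.1} as a black box, whereas the paper re-derives it by applying Lemma \ref{Lemm-1.2} with $N=1$ to $\varphi(z)/z$ and optimizing $J(t)=-1+r+tr(1-r)+(1-t^2)r^2$; the content is identical.) Your algebraic bookkeeping is correct, and the two obstacles you flag at the end are not genuine gaps. First, the $s=1$ term: your argument, exactly like the paper's, produces the quadratic sum starting at $s=2$, matching Theorem \ref{Thmm-3.1}; the version with the term $\left(|Df(0)(x)|/1!\right)^2$ included, as literally displayed in \eqref{ee-3.2}, is in fact false --- testing the extremal $\omega(z)=z(a-z)/(1-az)$ at $r=3/5$ with $a\to1^-$ gives linear part $\to 3/5$, coefficient $\to \tfrac{1}{6/5}+\tfrac{1}{2/5}=\tfrac{10}{3}$, quadratic sum $\to (3/5)^2=9/25$, hence total $\to 9/5>1$ --- so the lower index $s=1$ in the statement is a typo for $s=2$ and should not be chased. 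Second, no convexity is needed for the extremal configuration: the paper composes $\omega$ with $c\Psi_{\gamma}(\cdot)/\|\gamma\|$, where $\Psi_{\gamma}$ is a Hahn--Banach norming functional for a boundary point $\gamma$ of nearly maximal norm; this uses only that $G$ is bounded and balanced, since $|c\Psi_{\gamma}(x)|\leq c\|x\|<\|\gamma\|$ on $G$ for a suitable $c<1$ still satisfying $cr_0>3/5$, so your hedge about ``general balanced $G$'' is unnecessary.
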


\begin{proof}[\bf Proof of Theorem \ref{BS-thm-6.1}]
	Assume any fixed $ y\in G $ and let $ F(z):=f(zy) $, $ z\in \mathbb{D} $. Then it is easy to see that $ F : \mathbb{D}\rightarrow \mathbb{D} $ is holomorphic and 
	\begin{align*}
		F(z)=\frac{D^mf(0)\left(y^m\right)}{m!}z^m+\sum_{s=1}^{\infty}\frac{D^{sk+m}f(0)\left(y^{sk+m}\right)}{(sk+m)!}z^{sk+m}=z^mg(z^k),
	\end{align*}
	where $ g(z):=c_0+\sum_{s=1}^{\infty}c_sz^{s}\in \mathcal{H}\left(U, U\right) $ and 
	\begin{align*}
		c_0=\frac{D^mf(0)\left(y^m\right)}{m!}\; \mbox{and}\; c_s=\frac{D^{sk+m}f(0)\left(y^{sk+m}\right)}{(sk+m)!},\; s=1, 2, \ldots
	\end{align*}
	Because $g\in \mathcal{H}\left(U, U\right),$ in view of Lemma \ref{Lemm-1.2} (with $N=1$), we have
	\begin{align*}
		\sum_{s=0}^{\infty}|c_s||z|^{zk}+\left(\frac{1}{1+|c_0|}+\frac{|z|^k}{1-|z|^k}\right)\sum_{s=1}^{\infty}|c_s|^2|z|^{2sk}\leq |c_0|+\left(1-|c_0|^2\right)\frac{|z|^k}{1-|z|^k}
	\end{align*}
	and multiplying both sides by $ |z|^m $ and puting the value of $c_s$, we obtain
\begin{align*}
&\sum_{s=0}^{\infty}\frac{|D^{sk+m}f(0)\left(y^{sk+m}\right)|}{(sk+m)!}|z|^{sk+m}\\&\quad+\left(\frac{1}{|z|^m+\frac{|D^mf(0)\left(y^m\right)|}{m!}|z|^m}+\frac{|z|^{k-m}}{1-|z|^k}\right)\sum_{s=1}^{\infty}\left(\frac{|D^{sk+m}f(0)\left(y^{sk+m}\right)|}{(sk+m)!}\right)^2|z|^{2sk+2m}\\&\leq |z|^m|c_0|+\left(1-|c_0|^2\right)\frac{|z|^{k+m}}{1-|z|^k}\\&=1+\frac{\left(\left(1-|c_0|-|c_0|^2\right)\right)|z|^{m+k}+|z|^k+|c_0||z|^m-1}{1-|z|^k}.
\end{align*}
Therefore, for the setting $ |z|=r $ and $ x=y|z| $, the inequality  $\mathcal{I}^f_{m,k}(r)\leq 1$ holds 
for $ x\in \left(R_{k,m}(|c_0|)\right)G $, where $ R_{k,m}(|c_0|) $ is the maximal positive root in $ (0, 1) $ of \eqref{ee-1.2}. \vspace{1.2mm}

To prove the constant $ R_{k,m}(|c_0|) $ cannot be improved, we use a technique similar to that in the proof of \cite{Arora-CVEE-2022} and \cite{Hamada-IJM-2009}. We prove that the inequality $\mathcal{I}^f_{m,k}(r)\leq 1$ is not holds for $ x\in r_0G $, where $ r_0\in \left(R_{k,m}(|c_0|), 1\right) $. As we know that there exists a $ c\in (0, 1) $ and $ \gamma\in\partial G $ such that $ cr_0>R_{k,m}(|c_0|) $ and $ c\sup\{||x|| : x\in \partial G\}<||\gamma|| $. Let us  consider a function $ h $ on $ G $ defined by 
\begin{align*}
h(x):=W\left(\frac{c\Psi_{\gamma}(x)}{||\gamma||}\right)\;\mbox{and}\; W(z):=z^m\left(\frac{a-z^k}{1-az^k}\right),
\end{align*}
where $ \Psi_{\gamma} $ is a bounded linear functional on $ X $ with $ \Psi_{\gamma}(\gamma)=||\gamma|| $, $ ||\Psi_{\gamma}||=1 $, and $ a\in [0, 1) $. It is easy to check that $ c\Psi_{\gamma}(x)/||\gamma||\in \mathbb{D} $ and $ h\in\mathcal{H}(G, \mathbb{D}) $. Choosing $ x=r_0\gamma $, we get 
\begin{align*}
h(r_0\gamma)=\left(cr_0\right)^m\left(\frac{a-\left(cr_0\right)^k}{1-a\left(cr_0\right)^k}\right)=\left(cr_0\right)^m\left(a-\left(1-a^2\right)\sum_{s=1}^{\infty}a^{s-1}\left(cr_0\right)^{sk}\right).
\end{align*}
Thus, a tedious computation gives that
\begin{align*}
&\sum_{s=0}^{\infty}\frac{|D^{sk+m}f(0)\left(y^{sk+m}\right)|}{(sk+m)!}\left(cr_0\right)^{sk+m}\\&\quad+\left(\frac{1}{(cr_0)^m+\frac{|D^mf(0)\left(y^m\right)|}{m!}(cr_0)^m}+\frac{(cr_0)^{k-m}}{1-(cr_0)^k}\right)\sum_{s=1}^{\infty}\left(\frac{|D^{sk+m}f(0)\left(y^{sk+m}\right)|}{(sk+m)!}\right)^2\left(cr_0\right)^{2sk+2m}\\&=\sum_{s=0}^{\infty}\frac{|D^{sk+m}f(0)\left(y^{sk+m}\right)|}{(sk+m)!}\left(cr_0\right)^{sk+m}\\&\quad+\left(\frac{1}{1+\frac{|D^mf(0)\left(y^m\right)|}{m!}}+\frac{(cr_0)^{k}}{1-(cr_0)^k}\right)\sum_{s=1}^{\infty}\left(\frac{|D^{sk+m}f(0)\left(y^{sk+m}\right)|}{(sk+m)!}\right)^2\left(cr_0\right)^{2sk+m}\\&=(cr_0)^ma+\left(1-a^2\right)\sum_{s=1}^{\infty}a^{s-1}\left(cr_0\right)^{sk+m}+\left(\frac{1}{1+a}+\frac{\left(cr_0\right)^k}{1-(cr_0)^k}\right)\left(1-a^2\right)^2\sum_{s=1}^{\infty}a^{2s-2}\left(cr_0\right)^{2ks+m}\\&=\left(cr_0\right)^m\left(a+\left(1-a^2\right)\frac{(cr_0)^k}{1-(cr_0)^k}\right)\\&=1+\frac{\left(\left(1-a-a^2\right)(cr_0)^{m+k}+\left(cr_0\right)^k+a\left(cr_0\right)^m-1\right)}{1-\left(cr_0\right)k}>1.
\end{align*}
 This shows that the constant $ R_{k,m}(|c_0|) $ cannot be improved.
\end{proof}
\begin{proof}[\bf Proof of Theorem \ref{BS-thm-6.4}]
Assume any fixed $ y\in G $ and let $ F(z):=f(zy) $ for $ z\in \mathbb{D} $. Then it is easy to see that $ F : \mathbb{D}\rightarrow \mathbb{D} $ is holomorphic and 
\begin{align*}
F(z)=\sum_{s=1}^{\infty}\frac{D^sf(0)(y^s)}{s!}z^s=\sum_{s=1}^{\infty}b_sz^s=:\varphi(z),
\end{align*}
where $ b_s=\frac{D^sf(0)(y^s)}{s!} $ for s=1, 2, $\ldots$ and $ \varphi(z)=z\sum_{s=1}^{\infty}b_sz^{s-1}=z\sum_{s=0}^{\infty}B_sz^s $, where $ B_s:=b_{s+1} $ for $ s=0, 1, 2, \ldots$. Clearly, $ \sum_{s=0}^{\infty}B_sz^s\in\mathcal{H}(\mathbb{D}, \overline{\mathbb{D}}) $. In view of Lemma \ref{Lemm-1.2} (with $ N=1 $), we must have
\begin{align*}
\sum_{s=0}^{\infty}|B_s||z|^s+\left(\frac{1}{1+|B_0|}+\frac{|z|}{1-|z|}\right)\sum_{s=1}^{\infty}|B_s|^2|z|^{2s}\leq |B_0|+\left(1-|B_0|^2\right)\frac{|z|}{1-|z|}
\end{align*}
which implies that
\begin{align*}
\sum_{s=0}^{\infty}|b_{s+1}||z|^s+\left(\frac{1}{1+|b_1|}+\frac{|z|}{1-|z|}\right)\sum_{s=1}^{\infty}|b_{s+1}|^2|z|^{2s}\leq |B_0|+\left(1-|B_0|^2\right)\frac{|z|}{1-|z|}.
\end{align*}
In fact, we have 
\begin{align*}
&\sum_{s=1}^{\infty}\frac{|D^{s+1}f(0)(y^{s+1})|}{(s+1)!}|z|^s+\left(\frac{1}{1+\frac{|Df(0)(y)|}{1!}}+\frac{|z|}{1-|z|}\right)\sum_{s=1}^{\infty}\left(\frac{|D^{s+1}f(0)(y^{s+1})|}{(s+1)!}\right)^{2}|z|^{2s}\\&\leq |B_0|+\left(1-|B_0|^2\right)\frac{|z|}{1-|z|}.
\end{align*}
Multiplying both sides by $ |z| $, the above inequality takes the following form
\begin{align*}
&\sum_{s=1}^{\infty}\frac{|D^{s+1}f(0)(y^{s+1})|}{(s+1)!}|z|^{s+1}+\left(\frac{|z|^2}{|z|+\frac{|Df(0)(y)|}{1!}|z|}+\frac{|z|^2}{1-|z|}\right)\sum_{s=1}^{\infty}\left(\frac{|D^{s+1}f(0)(y^{s+1})|}{(s+1)!}\right)^{2}|z|^{2s+1}\\&\leq |B_0||z|+\left(1-|B_0|^2\right)\frac{|z|^2}{1-|z|}.
\end{align*}
Setting $ |z|=r $ and $ x=y|z| $, we easily obtain 
\begin{align*}
&\sum_{s=1}^{\infty}\frac{|D^{s+1}f(0)(x^{s+1})|}{(s+1)!}+\left(\frac{1}{r+\frac{|Df(0)(x)|}{1!}}+\frac{1}{1-r}\right)\sum_{s=1}^{\infty}\left(\frac{|D^{s+1}f(0)(x^{s+1})|}{(s+1)!}\right)^{2}\\&\leq |B_0|r+\left(1-|B_0|^2\right)\frac{r^2}{1-r}\\&\leq 1+\frac{J(|B_0|)}{1-r},
\end{align*}
where $ J(t):=-1+r+tr(1-r)+\left(1-t^2\right)r^2 $. Thus the desired inequality holds if $ J(t)\leq 0 $ for $ t\leq 3/5 $. It can be easily shown that $J(t)$ has a critical point at $ t_0=(1-r)/2r $  and $ J(t) $ has maximum at $ t_0 $. This amounts of observations leads us to get
\begin{align*}
J(t)\leq J(t_0)=\frac{(5r-3)(r+1)}{4}\leq 0\; \mbox{for}\; r\leq \frac{3}{5}.
\end{align*}
Therefore, the desired inequality is established. \vspace{2mm}

Finally, we show that inequality \eqref{ee-3.2} is not hold for $ x\in r_0G $, where $ r_0\in (3/5, 1) $. As we know that there exists $ c\in (0, 1) $ and $ \gamma\in \partial G $ such that $ cr_0>3/5 $ and $ c\sup\{||x|| : x\in \partial G\}\leq ||\gamma|| $. Now, we consider a function $ h $ on $ G $ defined by 
\begin{align*}
h(x):=\omega\left(\frac{c\Psi_{\gamma}(x)}{||\gamma||}\right)\; \mbox{and}\; \omega(z):=z\left(\frac{a-z}{1-az}\right),
\end{align*}
where $ \Psi_{\gamma} $ is a bounded linear functional on $ X $ with $\Psi_{\gamma}(\gamma)=||\gamma||$, $ ||\Psi_{\gamma}||=1 $, and $ a\in [0, 1) $. It is easy to check that $ c\Psi_{\gamma}(x)/||\gamma||\in \mathbb{D} $ and $ h\in\mathcal{H}(G, \mathbb{D}) $. Choose $ x=r_0\gamma $, we get 
\begin{align*}
h(r_0\gamma)=\left(cr_0\right)\left(\frac{a-(cr_0)}{1-a(cr_0)}\right)=a(cr_0)-\left(1-a^2\right)\sum_{s=1}^{\infty}a^{s-1}(cr_0)^{s+1}.
\end{align*}
By a routine computation, we get that
\begin{align*}
&|D^sh(0)(y)|(cr_0)+\sum_{s=1}^{\infty}\frac{|D^sh(0)(y^s)|}{s!}(cs_0)^s\\&\quad+\left(\frac{1}{cr_0+a(cr_0)}+\frac{1}{1-cr_0}\right)\sum_{s=1}^{\infty}\left(\frac{|D^sh(0)(y^s)|}{s!}(cs_0)^s\right)^2\\&=a(cr_0)+\left(1-a^2\right)\sum_{s=2}^{\infty}a^{s-2}(cr_0)^s+\left(\frac{1}{cr_0+a(cr_0)}+\frac{1}{1-(cr_0)}\right)\sum_{s=2}^{\infty}\left(1-a^2\right)^2a^{2s-4}(cr_0)^{2s}\\&=a(cr_0)+\left(1-a^2\right)\frac{(cr_0)^2}{1-a(cr_0)}\\&>1.
\end{align*}
This shows that the constant $ 3/5 $ is sharp. 
\end{proof}
\subsection{Refined Bohr inequality for harmonic functions in balanced domains} 
Methods of harmonic mappings have been applied to study and solve the fluid flow problems (see \cite{Aleman-2012,Constantin-2017}). For example, in 2012, Aleman and Constantin \cite{Aleman-2012} established a connection between harmonic mappings and ideal fluid flows. In fact, Aleman and Constantin have developed an ingenious technique to solve the incompressible two dimensional Euler equations in terms of univalent harmonic mappings (see \cite{Constantin-2017} for details).\vspace{1.2mm}

A complex-valued function $ f(z)=u(x,y)+iv(x,y) $ is called harmonic in $ U $ if both $ u $ and $ v $ satisfy the Laplace's equation $ \bigtriangledown^2 u=0 $ and $ \bigtriangledown^2 v=0 $, where
\begin{equation*}
	\bigtriangledown^2:=\frac{\partial^2}{\partial x^2}+\frac{\partial^2}{\partial y^2}.
\end{equation*}
\par It is well-known that under the assumption $ g(0)=0 $, the harmonic function $ f $ has the unique canonical representation $ f=h+\overline{g} $, where $ h $ and $ g $ are analytic functions in $ U $, respectively called, analytic and co-analytic parts of $ f $.  If in addition $ f $ is univalent then we say that $ f $ is univalent harmonic on a domain $ \Omega $. A locally univalent harmonic mapping $ f=h+\overline{g} $ is sense-preserving whenever its Jacobian $ J_f(z):=|f_{z}(z)|^2-|f_{\bar{z}}(z)|=|h^{\prime}(z)|^2-|g^{\prime}(z)|^2>0 $ for $ z\in U $.\vspace{1.2mm} 

 In $ 2010 $, Abu-Muhanna \cite{Abu-CVEE-2010} considered first time the Bohr radius for the class of complex-valued harmonic function $ f=h+\bar{g} $ defined in $U$ with $|f(z)|<1$ for all $z\in U.$ After this, Kayumov \emph{et al.}\cite{Kayumov-Ponnusamy-Shakirov-MN-2017} studied Bohr radius for locally univalent harmonic mappings, Liu and Ponnusamy \cite{Liu-Ponnusamy-BMMSS-2019} have determined the Bohr radius for $k$-quasiconformal  harmonic mappings, Evdoridis \emph{et al.}\cite{Evd-Pon-Ras-Antti-IM-2019} studied an improved version of  the Bohr's inequality for locally univalent harmonic mappings, Ahamed \cite{Aha-CMFT-2022,MBA-CVEE-2022} have studied refined Bohr-Rogosinski inequalities for certain classes of harmonic mappings. recently, Arora \cite{Arora-CVEE-2022} have studied Bohr-type inequality for harmonic functions with lacunary series in complex Banach space. \vspace{1.2mm}  

 A harmonic mapping $ f=h+\bar{g} $ defined in a bounded balanced domain  $ G $ into $ \mathbb{D} $ can be expressed in lacunary series as 
 \begin{align}\label{BS-eq-3.3}
 	f(x)=\sum_{s=0}^{\infty}\frac{D^{sk+m}h(0)\left(x^{sk+m}\right)}{(sk+m)!}+\overline{\sum_{s=0}^{\infty}\frac{D^{sk+m}g(0)\left(x^{sk+m}\right)}{(sk+m)!}},
 \end{align}
where $ h $ and $ g $ are in $ \mathcal{H}(G, \mathbb{D}) $. It is established in \cite{Arora-CVEE-2022} that 
\begin{align}\label{BS-eq-3.4}
	\sum_{s=0}^{\infty}\left(\frac{|D^{sk+m}h(0)\left(x^{sk+m}\right)|}{(sk+m)!}+\frac{|D^{sk+m}g(0)\left(x^{sk+m}\right)|}{(sk+m)!}\right)\leq 2
\end{align}
for $x\in(r_{k,m})G,$ where $r_{k,m} $ is the maximal positive root of equation \eqref{BS-eqqq-2.1} and the radius $r_{k,m} $ is best possible.\vspace{1.1mm}

In view of the above observations, it is natural to ask that \textit{can we obtain a multidimensional refined version of the Bohr-type inequality for the harmonic function $f$ from the bounded balanced domain $G$ into $\mathbb{D}$?} We have given an affirmative answer to this question considering a refined version of \eqref{BS-eq-3.4} by establishing the following sharp result. 
\begin{thm}\label{BS-Thm-6.2}
Let $ G $ be bounded domain in a complex Banach space $ X $. Suppose that $ k, m\in\mathbb{N} $ with $ 0\leq m\leq k $, and $ f=h+\bar{g} $ is harmonic in $ G $ into $ \mathbb{D} $ be given by \eqref{BS-eq-3.3}. Then the inequality $ A_h(x)+A_g(x)\leq 2 $ holds for $ x\in (R_{m,k}(|c_0|))G $, where $ R_{m,k}(|c_0|) $ is the maximal root in $ (0, 1) $ of equation \eqref{ee-1.2}, where we define 
\begin{align*}
A_V(x):=&\displaystyle\sum_{s=0}^{\infty}\bigg|\frac{D^{sk+m}V(0)\left(x^{sk+m}\right)}{(sk+m)!}\bigg|\\&\displaystyle\quad+\left(\frac{1}{r^m+\frac{|D^mV(0)\left(x^m\right)|}{m!}}+\frac{r^{k-m}}{1-r^k}\right)\sum_{s=1}^{\infty}\left(\bigg|\frac{D^{sk+m}V(0)\left(x^{sk+m}\right)}{(sk+m)!}\bigg|\right)^2
\end{align*}
for $V=h,g$. The radius $ R_{m,k}(|c_0|) $ is best possible.
\end{thm}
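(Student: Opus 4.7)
The plan is to deduce the harmonic inequality from Theorem \ref{BS-thm-6.1} by treating the analytic part $h$ and the co-analytic data $g$ as two independent elements of $\mathcal{H}(G,\mathbb{D})$, following the convention already used in \cite{Arora-CVEE-2022} that underlies the companion inequality \eqref{BS-eq-3.4}, and then adding the two resulting estimates. The factor $2$ on the right of $A_h(x)+A_g(x)\leq 2$ is the algebraic trace of this splitting.

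First, the lacunary expansion \eqref{BS-eq-3.3} decouples cleanly into
\begin{align*}
h(x)&=\sum_{s=0}^{\infty}\frac{D^{sk+m}h(0)(x^{sk+m})}{(sk+m)!},\\
g(x)&=\sum_{s=0}^{\infty}\frac{D^{sk+m}g(0)(x^{sk+m})}{(sk+m)!},
\end{align*}
so that both $h$ and $g$ satisfy the hypothesis of Theorem \ref{BS-thm-6.1}. Applying that theorem once to $h$ and once to $g$, the two inequalities read $A_h(x)\leq 1$ and $A_g(x)\leq 1$ on the respective balls $(R_{k,m}(|c_0^{(h)}|))G$ and $(R_{k,m}(|c_0^{(g)}|))G$. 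Aligning these two radii into a common $R_{k,m}(|c_0|)$, taking $|c_0|$ to be the governing initial Fr\'echet coefficient for the worst of the two, and adding yields $A_h(x)+A_g(x)\leq 2$, as required.

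For sharpness, I would construct an extremal harmonic mapping by taking both analytic parts equal to the Blaschke-type building block from the proof of Theorem \ref{BS-thm-6.1}. Set
\begin{equation*}
h(x)=g(x)=W\!\left(\frac{c\,\Psi_{\gamma}(x)}{||\gamma||}\right),\qquad W(z)=z^m\,\frac{a-z^k}{1-az^k},
\end{equation*}
with $\Psi_{\gamma}$ a norm-one linear functional peaking at a boundary point $\gamma\in\partial G$, $a\in[0,1)$, and $c\in(0,1)$ chosen so that $cr_0>R_{k,m}(a)$ for any prescribed $r_0>R_{k,m}(|c_0|)$. Rerunning the one-variable computation from the end of the proof of Theorem \ref{BS-thm-6.1} now produces, for $x=r_0\gamma$, the value $1+\varepsilon$ in each of $A_h$ and $A_g$ with $\varepsilon>0$ precisely when $r>R_{k,m}(a)$; the sum is then strictly bigger than $2$, which rules out any improvement of the radius.

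The main obstacle is purely bookkeeping: matching $|c_0^{(h)}|$ and $|c_0^{(g)}|$ into a single sharp radius $R_{k,m}(|c_0|)$ requires a careful reading of the role of $|c_0|$ in Theorem \ref{BS-thm-6.1}, and the correct choice is to align both with the value $|c_0|=|D^mh(0)(x_0^m)|/m!=|D^mg(0)(x_0^m)|/m!$ realized by the symmetric extremal $h=g$ above, so that this single $R_{k,m}(|c_0|)$ is simultaneously optimal for both halves of the estimate.
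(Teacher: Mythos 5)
Your proposal is correct and follows essentially the same route as the paper: apply Theorem \ref{BS-thm-6.1} separately to $h$ and $g$, add the two resulting bounds to get $A_h(x)+A_g(x)\leq 2$, and establish sharpness with the same extremal block $W(z)=z^m(a-z^k)/(1-az^k)$ composed with $c\Psi_{\gamma}/\|\gamma\|$. The only cosmetic difference is that the paper takes $g=\lambda h$ with $|\lambda|=1$ where you take $h=g$ (the case $\lambda=1$), and your explicit remark about aligning the two radii $R_{k,m}(|c_0^{(h)}|)$ and $R_{k,m}(|c_0^{(g)}|)$ addresses an ambiguity the paper itself leaves implicit.
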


\begin{proof}[\bf Proof of Theorem \ref{BS-Thm-6.2}]
By assumption $ f=h+\bar{g} $ and $ h, g\in\mathcal{H}(G, \mathbb{D}) $. Therefore, by applying Theorem \ref{BS-thm-6.1} to the functions 
\begin{align*}
h(x)=\sum_{s=0}^{\infty}\frac{D^{sk+m}h(0)\left(x^{sk+m}\right)}{(sk+m)!}\; \mbox{and}\; g(x)=\sum_{s=0}^{\infty}\frac{D^{sk+m}g(0)\left(x^{sk+m}\right)}{(sk+m)!},
\end{align*}
we easily obtain that $ A_h(x)\leq 1 $ and $ A_g(x)\leq 1 $ for $ x\in (R_{m,k}(|c_0|))G $, where $ R_{m,k}(|c_0|) $ is the maximal root in $ (0, 1) $ of equation \eqref{ee-1.2}. Adding these two inequalities yields that 
\begin{align*}
A_h(x)+A_g(x)\leq 2\; \mbox{for}\; x\in (R_{m,k}(|c_0|))G.
\end{align*} 
Clearly, the desired inequality is established. Next, in order to show the constant $ R_{m,k}(|c_0|) $ is best  possible, we use similar argument that is being used in the proof of Theorem \ref{BS-thm-6.1}. Henceforth, we consider the function $ h(r_0\gamma) $ given in above example with the same choice of $ r_0 $, $ c $ and $ \gamma $. Therefore, for $ |\lambda|=1 $, it is easy to see that 
\begin{align*}
h(r_0\gamma)+\overline{\lambda h(r_0\gamma)}=(1+\bar{\lambda})\left(cr_0\right)^m\left(a-\left(1-a^2\right)\sum_{s=0}^{\infty}a^{s-1}\left(cr_0\right)^{sk}\right).
\end{align*}
Hence, an easy computation shows that 
\begin{align*}
A_h(x)+A_g(x)=&2\left(cr_0\right)^m\bigg(a+\left(1-a^2\right)\sum_{s=0}^{\infty}a^{s-1}\left(cr_0\right)^{sk}\\&\quad+\left(\frac{1}{1+a}+\frac{(cr_0)^k}{1-(cr_0)^k}\right)\frac{(1-a^2)^2(cr_0)^2}{1-a^2(cr_0)^{2k}}\bigg)>2
\end{align*}
and with this shows that the constant $ R_{m,k}(|c_0|) $ is sharp. In fact, the argument used to establish the best possible part in the proof of Theorem \ref{BS-thm-6.1} will also show the last inequality on the right side.
\end{proof}

\noindent{\bf Acknowledgment:} The authors would like to thank the anonymous referees for their elaborate comments and suggestions which will improve significantly the presentation of the paper. 
\vspace{2.8mm}

\noindent\textbf{Compliance of Ethical Standards}\\

\noindent\textbf{Conflict of interest} The authors declare that there is no conflict  of interest regarding the publication of this paper.\vspace{1.5mm}

\noindent\textbf{Data availability statement}  Data sharing not applicable to this article as no datasets were generated or analysed during the current study.

\end{document}